\newcommand{\be}{\begin{equation}}
\newcommand{\ee}{\end{equation}}
\newtheorem{thm}{Theorem}[section]
\newtheorem{lem}{Lemma}[section]
\newtheorem{dfn}{Definition}[section]
\title{GLOBAL SEARCH BASED ON EFFICIENT DIAGONAL PARTITIONS AND A SET OF
LIPSCHITZ CONSTANTS\thanks{This work has been partially supported
by the following grants: FIRB RBAU01JYPN, FIRB RBNE01WBBB, and
RFBR 04-01-00455-a.}}
\author{Yaroslav D. Sergeyev\thanks{Dipartimento di
Elettronica, Informatica e Sistemistica, Universit\`a della
Calabria, Via P.Bucci, Cubo 41C -- 87036 Rende (CS), Italy, and
N.I.~Lobachevski University of Nizhni Novgorod, Russia ({\tt
yaro@si.deis.unical.it})} \and Dmitri E.
Kvasov\thanks{Dipartimento di Statistica, Universit\`a di Roma
``La Sapienza'', P.le A.~Moro 5 -- 00185 Roma, Italy, and
N.I.~Lobachevski University of Nizhni Novgorod, Russia  ({\tt
kvadim@si.deis.unical.it})}}
\begin{document}

\maketitle

\begin{abstract}
In the paper, the global optimization problem of a
multidimensional ``black-box'' function satisfying the Lipschitz
condition over a hyperinterval with an unknown Lipschitz constant
is considered. A new efficient algorithm for solving this problem
is presented. At each iteration of the method a number of possible
Lipschitz constants is chosen from a set of values varying from
zero to infinity. This idea is unified with an efficient diagonal
partition strategy. A novel technique balancing usage of local and
global information during partitioning is proposed. A new
procedure for finding lower bounds of the objective function over
hyperintervals is also considered. It is demonstrated by extensive
numerical experiments performed on more than 1600 multidimensional
test functions that the new algorithm shows a very promising
performance.
\end{abstract}

\begin{keywords}
Global optimization, black-box functions, derivative-free methods,
partition strategies, diagonal approach
\end{keywords}

\begin{AMS}
65K05, 90C26, 90C56
\end{AMS}

\pagestyle{myheadings} \thispagestyle{plain}
\markboth{Ya.D.~Sergeyev, D.E.~Kvasov}{Global search based on
efficient diagonal partitions and a set of Lipschitz constants}

\section{Introduction}

Many decision-making problems arising in various fields of human
activity (technological processes, economic models, etc.) can be
stated as global optimization problems (see,
e.g.,~\cite{Floudas:et:al(1999), Pinter(1996),
Strongin&Sergeyev(2000)}). Objective functions describing
real-life applications are very often multiextremal,
non-differentiable, and hard to be evaluated. Numerical techniques
for finding solutions to such problems have been widely discussed
in literature (see, e.g.,~\cite{Dixon&Szego(1978),
Horst&Pardalos(1995), Horst&Tuy(1993), Pinter(1996),
Strongin&Sergeyev(2000)}).

In this paper, the Lipschitz global optimization problem is
considered. This type of optimization problems is sufficiently
general both from theoretical and applied points of view. In fact,
it is based on a rather natural assumption that any limited change
in the parameters of the objective function yields some limited
changes in the characteristics of the object's performance. The
knowledge of a bound on the rate of change of the objective
function, expressed by the Lipschitz constant, allows one to
construct global optimization algorithms and to prove their
convergence (see, e.g.,~\cite{Horst&Pardalos(1995),
Horst&Tuy(1993), Pinter(1996), Strongin&Sergeyev(2000)}).

Mathematically, the global optimization problem considered in the
paper can be formulated as minimization of a multidimensional
multiextremal ``black-box'' function that satisfies the Lipschitz
condition over a domain $D \subset \mathbb{R} ^N$ with an unknown
constant~$L$, i.e., finding the value $f^*$ and points $x^*$ such
that
 \be
  f^*=f(x^*)=\min_{x\in D } f(x), \label{(1.1)}
 \ee
 \be
  | f(x') - f(x'') | \leq L \| x'-x'' \| , \hspace{3mm} x',x'' \in
  D, \hspace{4mm} 0<L<\infty, \label{(1.2)}
 \ee
where
 \be
  D=[a,b\,]=\{x \in \mathbb{R} ^N : a(j)\leq x(j)\leq b(j), \, 1\leq j \leq N \},  \label{(1.3)}
 \ee
$a$, $b$ are given vectors in $\mathbb{R} ^N$, and $\| \cdot \|$
denotes the Euclidean norm.

The function $f(x)$ is supposed to be non-differentiable. Hence,
optimization methods using derivatives cannot be used for solving
problem (\ref{(1.1)})--(\ref{(1.3)}). It is also assumed that
evaluation of the objective function at a point (also referred to
as a `function trial') is a time-consuming operation.

Numerous algorithms have been proposed (see,
e.g.,~\cite{Dixon&Szego(1978), Horst&Pardalos(1995),
Horst&Tuy(1993), Jones:et:al.(1993), Kvasov&Sergeyev(2003),
Mladineo(1986), Pinter(1996), Piyavskij(1972), Sergeyev(1995a),
Shubert(1972), Strongin&Sergeyev(2000)}) for solving problem
(\ref{(1.1)})--(\ref{(1.3)}). One of the main questions to be
considered in this occasion is: How can the Lipschitz constant $L$
be specified? There are several approaches to specify the
Lipschitz constant. First of all, it can be given a priory (see,
e.g.,~\cite{Horst&Pardalos(1995), Horst&Tuy(1993), Mladineo(1986),
Piyavskij(1972), Shubert(1972)}). This case is very important from
the theoretical viewpoint but is not frequently encountered in
practice. The more promising and practical approaches are based on
an adaptive estimation of $L$ in the course of the search. In such
a way, algorithms can use either a global estimate of the
Lipschitz constant (see, e.g.,~\cite{Kvasov&Sergeyev(2003),
Pinter(1996), Strongin&Sergeyev(2000)}) valid for the whole region
$D$ from (\ref{(1.3)}), or local estimates $L_i$ valid only for
some subregions $D_i \subseteq D$ (see,
e.g.,~\cite{Kvasov:et:al.(2003), Molinaro:et:al.(2001),
Sergeyev(1995a), Sergeyev(1995b), Strongin&Sergeyev(2000)}).

Since the Lipschitz constant has a significant influence on the
convergence speed of the Lipschitz global optimization algorithms,
the problem of its specifying is of the great importance. In fact,
accepting too high value of $L$ for a concrete objective function
means assuming that the function has complicated structure with
sharp peaks and narrow attraction regions of minimizers within the
whole admissible region. Thus, too high value of $L$ (if it does
not correspond to the real behavior of the objective function)
leads to a slow convergence of the algorithm to the global
minimizer.

Global optimization algorithms using in their work a global
estimate of $L$ (or some value of $L$ given a priori) do not take
into account local information about behavior of the objective
function over every small subregion of $D$. As it has been
demonstrated in~\cite{Sergeyev(1995a), Sergeyev(1995b),
Strongin&Sergeyev(2000)}, estimating local Lipschitz constants
allows us to accelerate significantly the global search.
Naturally, balancing between local and global information must be
performed in an appropriate way to avoid the missing of the global
solution.

Recently, an interesting approach unifying usage of local and
global information during the global search has been proposed
in~\cite{Jones:et:al.(1993)}. At each iteration of this new
algorithm, called DIRECT, instead of only one estimate of the
Lipschitz constant a set of possible values of $L$ is used.

As many Lipschitz global optimization algorithms, DIRECT tries to
find the global minimizer by partitioning the search hyperinterval
$D$ into smaller hyperintervals $D_i$ using a particular partition
scheme described in~\cite{Jones:et:al.(1993)}. The objective
function is evaluated only at the central point of a
hyperinterval. Each hyperinterval $D_i$ of a current partition of
$D$ is characterized by a lower bound of the objective function
over this hyperinterval. It is calculated similarly
to~\cite{Piyavskij(1972), Shubert(1972)} taking into account the
Lipschitz condition~(\ref{(1.2)}). A hyperinterval $D_i$ is
selected for a further partitioning if and only if for some value
$\tilde{L}>0$ (which estimates the unknown constant $L$) it has
the smallest lower bound of $f(x)$ with respect to the other
hyperintervals. By changing~$\tilde{L}$ from zero to infinity, at
each iteration DIRECT selects several `potentially optimal'
hyperintervals (see~\cite{Gablonsky(2001), He:et:al.(2002),
Jones:et:al.(1993)}) in such a way that for a particular estimate
of the Lipschitz constant the objective function could have the
smallest lower bound over every potentially optimal hyperinterval.

Due to its simplicity and efficiency, DIRECT has been widely
adopted in practical applications (see,
e.g.,~\cite{Baker:et:al.(2001), Bartholomew-Biggs:et:al.(2002),
Carter:et:al.(2001), Cox:et:al.(2001), Gablonsky(2001),
He:et:al.(2002), Ljungberg:et:al.(1986), Watson&Baker(2001)}). In
fact, DIRECT is a derivative-free deterministic algorithm which
does not require multiply runs. It has only one parameter which is
easy to set (see~\cite{Jones:et:al.(1993)}). The center-sampling
partition strategy of DIRECT reduces the computational complexity
in high-dimensional spaces allowing DIRECT to demonstrate good
performance results (see~\cite{Gablonsky&Kelley(2001),
Jones:et:al.(1993)}).

However, some aspects which can limit the applications of DIRECT
have been pointed out by several authors (see,
e.g.,~\cite{Cox:et:al.(2001), Finkel&Kelley(2004),
He:et:al.(2002), Jones(2001)}). First of all, it is difficult to
apply for DIRECT some meaningful stopping criterion, such as, for
example, stopping on achieving a desired accuracy in solution.
This happens because DIRECT does not use a single estimate of the
Lipschitz constant but a set of possible values of~$L$. Although
several attempts of introducing a reasonable criterion of arrest
have been done (see, e.g.,~\cite{Bartholomew-Biggs:et:al.(2002),
Cox:et:al.(2001), Gablonsky(2001), He:et:al.(2002)}), termination
of the search process caused by exhaustion of the available
computing resources (such as maximal number of function trials)
remains the most interesting for practical engineering
applications.

Another important observation regarding DIRECT is related to the
partition and sampling strategies adopted by the algorithm
(see~\cite{Jones:et:al.(1993)}) which simplicity turns into some
problems. As it has been outlined in~\cite{Cox:et:al.(2001),
Finkel&Kelley(2004), Ljungberg:et:al.(1986)}, DIRECT is quick to
locate regions of local optima but slow to converge to the global
one. This can happen for several reasons. The first one is a
redundant (especially in high dimensions, see~\cite{Jones(2001)})
partition of hyperintervals along all longest sides. The next
cause of DIRECT's slow convergence can be excessive partition of
many small hyperintervals located in the vicinity of local
minimizers which are not global ones. Finally, DIRECT --- like all
center-sampling partitioning schemes --- uses a relatively poor
information about behavior of the objective function~$f(x)$. This
information is obtained by evaluating $f(x)$ only at one central
point of each hyperinterval without considering the adjacent
hyperintervals. Due to this fact, DIRECT can manifest slow
convergence (as it has been highlighted
in~\cite{Huyer&Neumaier(1999)}) in cases when the global minimizer
lies at the boundary of the admissible region $D$ from
(\ref{(1.3)}).

There are several modifications to the original DIRECT algorithm.
For example, in~\cite{Jones(2001)}, partitioning along only one
long side is suggested to accelerate convergence in high
dimensions. The problem of stagnation of DIRECT near local
minimizers (emphasized, e.g., in~\cite{Finkel&Kelley(2004)}) can
be attacked by changing the parameter of the algorithm
(see~\cite{Jones:et:al.(1993)}) preventing DIRECT from being too
local in its orientation (see~\cite{Finkel&Kelley(2004),
Gablonsky(2001), He:et:al.(2002), Jones(2001),
Jones:et:al.(1993)}). But in this case the algorithm becomes too
sensitive to tuning such a parameter, especially for difficult
``black-box'' global optimization problems
(\ref{(1.1)})--(\ref{(1.3)}). In~\cite{Baker:et:al.(2001),
Watson&Baker(2001)}, another modification to DIRECT, called
`aggressive DIRECT', has been proposed. It subdivides all
hyperintervals with the smallest function value for each
hyperinterval size. This results in more hyperintervals
partitioned at every iteration, but the number of hyperintervals
to be subdivided grows up significantly. In~\cite{Gablonsky(2001),
Gablonsky&Kelley(2001)}, the opposite idea which is more biased
toward local improvement of the objective function has been
studied. Results obtained in~\cite{Gablonsky(2001),
Gablonsky&Kelley(2001)} demonstrate that this modification seems
to be more suitable for low dimensional problems with a single
global minimizer and a few local minimizers.

The goal of this paper is to present a new algorithm which would
be oriented (in contrast with the algorithm
from~\cite{Gablonsky(2001), Gablonsky&Kelley(2001)}) on solving
`difficult' multidimensional multiextremal ``black-box'' problems
(\ref{(1.1)})--(\ref{(1.3)}). It uses a new technique for
selection of hyperintervals to be subdivided which is unified with
a new diagonal partition strategy. A new procedure for estimation
of lower bounds of the objective function over hyperintervals is
combined with the idea (introduced in DIRECT) of usage of a set of
Lipschitz constants instead of a unique estimate. As demonstrated
by extensive numerical results, application of the new algorithm
to minimizing hard multidimensional ``black-box'' functions leads
to significant improvements.

The paper is organized as follows. In Section~2, a theoretical
background of the new algorithm --- a new partition strategy, a
new technique for lower bounding of the objective function over
hyperintervals, and a procedure for selection of `non-dominated'
hyperintervals for eventual partitioning --- is presented.
Section~3 is dedicated to description of the algorithm and to its
convergence analysis. Finally, Section~4 contains results of
numerical experiments executed on more than 1600 test functions.

\section{Theoretical background} \label{sectionTheory}

This section consists of the following three parts. First, a new
partition strategy developed in the framework of diagonal approach
is described. The second part presents a new procedure for
estimation of lower bounds of the objective function over
hyperintervals. The third part is dedicated to description of a
procedure for determining non-dominated hyperintervals ---
hyperintervals that have the smallest lower bound for some
particular estimate of the Lipschitz constant.

\subsection{Partition strategy} \label{sectionNewStrategy}

In global optimization algorithms, various techniques for adaptive
partition of the admissible region $D$ into a set of
hyperintervals $D_i$ are used (see, e.g.,
\cite{Horst&Pardalos(1995), Jones:et:al.(1993), Pinter(1996),
Sergeyev(2000)}) for solving (\ref{(1.1)})--(\ref{(1.3)}). A
current partition $\{D^k\}$ of $D$ in the course of an iteration
$k \geq 1$ of an algorithm can be represented as
 \be
  D=\cup_{i=1}^{m(k)+\Delta m(k)}D_i, \ D_i \cap D_j = \delta (D_i) \cap
  \delta (D_j), \ i \neq j. \label{PartitionD}
 \ee
Here, $\delta(D_i)$ denotes the boundary of $D_i$, $m(k)$ is the
number of hyperintervals at the beginning of the iteration $k$,
and $\Delta m(k)$ is the current number of new hyperintervals
produced during the $k$-th iteration. For example, if only one new
hyperinterval is generated at every iteration then $\Delta
m(k)=1$.

Over each hyperinterval $D_i \in \{D^k\}$, approximation of $f(x)$
is based on results obtained by evaluating $f(x)$ at some points
$x \in D$. For example, DIRECT~\cite{Jones:et:al.(1993)} involves
partitioning with evaluation of $f(x)$ at the central points of
hyperintervals (note that for DIRECT the number $\Delta m(k)$
in~(\ref{PartitionD}) can be greater than 1).

In this paper, the diagonal approach proposed
in~\cite{Pinter(1986), Pinter(1996)} is considered. In this
approach, the function $f(x)$ is evaluated only at two vertices
$a_i$ and $b_i$ of the main diagonals of each hyperinterval~$D_i$
independently of the problem dimension (recall that each
evaluation of $f(x)$ is a time-consuming operation).

Among attractions of the diagonal approach there are the following
two. First, the objective function is evaluated at two points at
each hyperinterval. Thus, diagonal algorithms obtain a more
complete information about the objective function than
central-sampling methods. Second, many efficient one-dimensional
global optimization algorithms can be easily extended to the
multivariate case by means of the diagonal scheme (see, e.g,
\cite{Kvasov:et:al.(2003), Kvasov&Sergeyev(2003),
Molinaro:et:al.(2001), Pinter(1986), Pinter(1996)}).

As shown in \cite{Kvasov&Sergeyev(2003), Sergeyev(2000)}, diagonal
global optimization algorithms based on widely used partition
strategies (such as bisection or partition $2^N$ used
in~\cite{Pinter(1986), Pinter(1996)}) produce many redundant
trials of the objective function. This redundancy slows down the
algorithm execution because of high time required for the
evaluations of $f(x)$. It also increases the computer memory
allocated for storing the redundant information.

The new partition strategy proposed in \cite{Sergeyev(2000)} (see
also \cite{Kvasov&Sergeyev(2003)}) overcomes these drawbacks of
conventional diagonal partition strategies. We start its
description by a two-dimensional example in Fig.~\ref{fig_ADC}. In
this figure, partitions of the admissible region~$D$ produced by
the algorithm at the initial iterations are presented. We suppose
just for simplicity that at each iteration only one hyperinterval
can be subdivided. Trial points of~$f(x)$ are represented by black
dots. The numbers around these dots indicate iterations at which
the objective function is evaluated at the corresponding points.
The terms `interval' and `subinterval' will be used to denote
two-dimensional rectangular domains.

In Fig.~\ref{fig_ADC}a, situation after the first two iterations
is presented. At the first iteration, the objective function
$f(x)$ is evaluated at the vertices $a$ and $b$ of the search
domain $D=[a,b]$. At the next iteration, the interval $D$ is
subdivided into three subintervals of equal area (equal volume in
general case). This subdivision is performed by two lines
(hyperplanes) orthogonal to the longest edge of $D$ and passing
through points $u$ and $v$ (see Fig.~\ref{fig_ADC}a). The
objective function is evaluated at both points $u$ and~$v$.

\begin{figure}[t]
\centerline{\epsfig{file=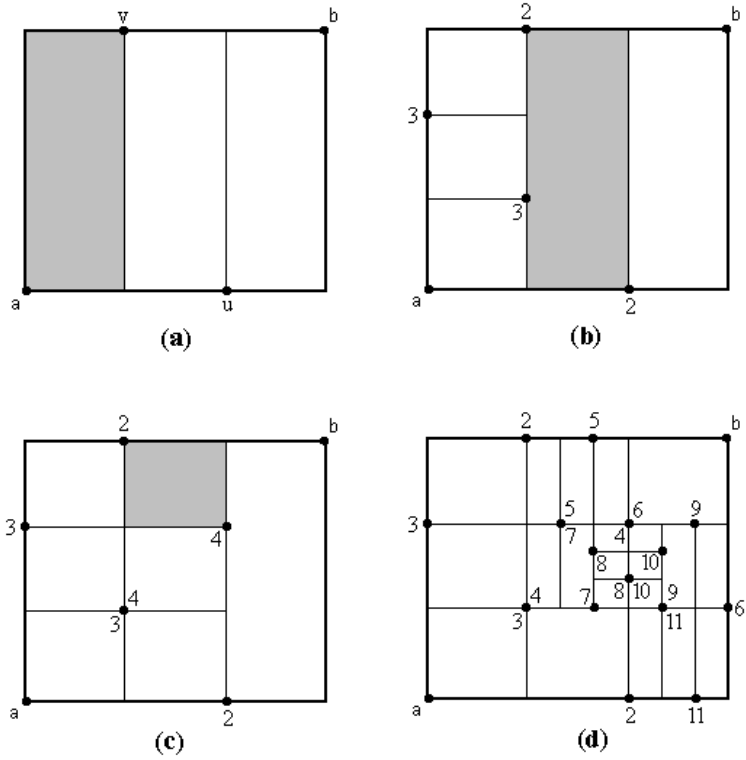,width=110mm,height=110mm,angle=0,silent=}}
\vspace{-7mm} \caption{An example of subdivisions by a new
partition strategy.} \label{fig_ADC}
\end{figure}

Suppose that the interval shown in grey in Fig.~\ref{fig_ADC}a is
chosen for the further partitioning. Thus, at the third iteration,
three smaller subintervals appear (see Fig.~\ref{fig_ADC}b). It
seems that a trial point of the third iteration is redundant for
the interval (shown in grey in Fig.~\ref{fig_ADC}b) selected for
the next splitting. But in reality, Fig.~\ref{fig_ADC}c
demonstrates that one of the two points of the fourth iteration
coincides with the point 3 at which $f(x)$ has already been
evaluated. Therefore, there is no need to evaluate $f(x)$ at this
point again, since the function value obtained at the previous
iteration can be used. This value can be stored in a special
vertex database and is simply retrieved when it is necessary
without reevaluation of the function. For example,
Fig.~\ref{fig_ADC}d illustrates situation after 11 iterations.
Among 22 points at which the objective function is to be
evaluated, there are 5 repeated points. That is, $f(x)$ is
evaluated 17 rather than 22 times. Note also that the number of
generated intervals (equal to~21) is greater than the number of
trial points (equal to~17). Such a difference becomes more
pronounced in the course of further subdivisions
(see~\cite{Kvasov&Sergeyev(2003)}).

Let us now describe the general procedure of hyperinterval
subdivision. Without loss of generality, hereafter we assume that
the admissible region $D$ in~(\ref{(1.3)}) is an $N$-dimensional
hypercube. Suppose that at the beginning of an iteration $k \geq
1$ of the algorithm the current partition $\{D^k\}$ of $D=[a,b]$
consists of $m(k)$ hyperintervals and $\Delta m(k) \geq 0$ new
hyperintervals have been already obtained. Let a hyperinterval
$D_t=[a_t, b_t]$ be chosen for partitioning too. The operation of
partitioning the selected hyperinterval~$D_t$ is performed as
follows (we omit the iteration index in the formulae).

\vspace{2mm}

\begin{description}
 \item[{\bf Step 1.}] Determine points $u$ and $v$ by the following formulae
 \be
  u=(a(1),\ldots,a(i-1),a(i) + \frac{2}{3}
  (b(i)-a(i)),a(i+1),\ldots,a(N)), \label{partition:u}
 \ee
 \be
  v=(b(1),\ldots,b({i-1}),b(i) + \frac{2}{3}
  (a(i)-b(i)),b(i+1),\ldots,b(N)),\label{partition:v}
 \ee
where $a(j)=a_t(j),\ b(j)=b_t(j),\ 1 \leq j \leq N$, and $i$ is
given by the equation
 \be
   i = \arg \min\, \max_{1 \leq j \leq N} | b(j) - a(j) |.
   \label{side_i}
 \ee
Get (evaluate or read from  the vertex database) the values of the
objective function $f(x)$ at the points $u$ and $v$.

 \item[{\bf Step 2.}] Divide the hyperinterval $D_t$ into three
hyperintervals of equal volume by two parallel hyperplanes that
are perpendicular to the longest edge $i$ of $D_t$ and pass
through the points $u$ and $v$.

\hspace{2mm}The hyperintervals $D_t$ is so substituted by three
new hyperintervals with indices $t'=t$, $m + \Delta m+1$, and $m +
\Delta m+2$ determined by the vertices of their main diagonals
 \be \label{partition:D_t}
   a_{t'} = a_{m+\Delta m +2} = u, \  b_{t'} = b_{m+\Delta m+1} = v,
 \ee
 \be \label{partition:D_1}
   a_{m+\Delta m+1} = a_t, \ b_{m+\Delta m+1} = v,
 \ee
 \be \label{partition:D_2}
   a_{m+\Delta m+2} = u, \  b_{m+\Delta m+2} = b_t.
 \ee

 \item[{\bf Step 3.}] Augment the number of hyperintervals generated
 during the iteration $k$:
 \be \label{partition:Delta_m}
  \Delta m = \Delta m(k) := \Delta m(k) + 2.
 \ee

\end{description}

The existence of a special indexing of hyperintervals establishing
links between hyperintervals generated at different iterations has
been theoretically demonstrated in~\cite{Sergeyev(2000)}. It
allows one to store information about vertices and the
corresponding values of $f(x)$ in a special database avoiding in
such a manner redundant evaluations of~$f(x)$. The objective
function value at a vertex is calculated only once, stored in the
database, and read when required. The new partition strategy
generates trial points in such a way that one vertex where $f(x)$
is evaluated can belong to several (up to~$2^N$) hyperintervals
(see, for example, a trial point at the 8-th iteration in
Fig.~\ref{fig_ADC}d). Since the time-consuming operation of
evaluation of the function is replaced by a significantly faster
operation of reading (up to $2^N$ times) the function values from
the database, the new partition strategy considerably speeds up
the search and also leads to saving computer memory. It is
particularly important that the advantage of the new scheme
increases with the growth of the problem dimension
(see~\cite{Kvasov&Sergeyev(2003), Sergeyev(2000)}).

The new strategy can be viewed also as a procedure generating a
new type of space-filling curve
--- adaptive diagonal curve. This curve is constructed on the main
diagonals of hyperintervals obtained during subdivision of $D$.
The objective function is approximated over the multidimensional
region $D$ by evaluating $f(x)$ at the points of one-dimensional
adaptive diagonal curve. The order of partition of this curve is
different within different subintervals of~$D$. If selection of
hyperintervals for partitioning is realized appropriately in an
algorithm, the curve condenses in the vicinity of the global
minimizers of $f(x)$ (see~\cite{Kvasov&Sergeyev(2003),
Sergeyev(2000)}).

\subsection{Lower bounds}

Let us suppose that at some iteration $k > 1$ of the global
optimization algorithm the admissible region $D$ has been
partitioned into hyperintervals~$D_i \in \{D^k\}$ defined by their
main diagonals $[a_i, b_i]$. At least one of these hyperintervals
should be selected for further partitioning. In order to make this
selection, the algorithm estimates the goodness (or, in other
words, characteristics) of the generated hyperintervals with
respect to the global search. The best (in some predefined sense)
characteristic obtained over some hyperinterval $D_t$ corresponds
to a higher possibility to find the global minimizer within $D_t$.
This hyperinterval is subdivided at the next iteration of the
algorithm. Naturally, more than one `promising' hyperinterval can
be partitioned at every iteration.

One of the possible characteristics of a hyperinterval can be an
estimate of the lower bound of $f(x)$ over this hyperinterval.
Once all lower bounds for all hyperintervals of the current
partition~$\{D^k\}$ have been calculated, the hyperinterval with
the smallest lower bound can be selected for the further
partitioning.

Different approaches to finding lower bounds of $f(x)$ have been
proposed in literature (see, e.g., \cite{Horst&Pardalos(1995),
Jones:et:al.(1993), Mladineo(1986), Pinter(1996), Piyavskij(1972),
Shubert(1972), Strongin&Sergeyev(2000)}) for solving problem
(\ref{(1.1)})--(\ref{(1.3)}). For example, given the Lipschitz
constant~$L$, in~\cite{Horst&Pardalos(1995), Mladineo(1986),
Piyavskij(1972)} a support function for $f(x)$ is constructed as
the upper envelope of a set of $N$-dimensional circular cones of
the slope~$L$. Trial points of $f(x)$ are coordinates of the
vertices of the cones. At each iteration, the global minimizer of
the support function is determined and chosen as a new trial
point. Finding such a point requires analyzing the intersections
of all cones and, generally, is a difficult and time-consuming
task, especially in high dimensions.

If a partition of $D$ into hyperintervals is used, each cone can
be considered over the corresponding hyperinterval, independently
from the other cones. This allows one (see, e.g.,
\cite{Horst&Pardalos(1995), Jones:et:al.(1993),
Kvasov:et:al.(2003), Pinter(1996)}) to avoid the necessity of
establishing the intersections of the cones and to simplify the
lower bound estimation. For example, multidimensional DIRECT
algorithm \cite{Jones:et:al.(1993)} uses one cone with symmetry
axis passed through a central point of a hyperinterval for lower
bounding $f(x)$ over this hyperinterval. The lower bound is
obtained on the boundary of the hyperinterval. This approach is
simple, but it gives a too rough estimate of the minimum function
value over the hyperinterval.

The more accurate estimate is achieved when two trial points over
a hyperinterval are used for constructing a support function for
$f(x)$. These points can be, for example, the vertices $a_i$ and
$b_i$ of the main diagonal of a hyperinterval $D_i \in \{D^k\}$
(see, e.g., \cite{Kvasov:et:al.(2003), Kvasov&Sergeyev(2003),
Pinter(1986), Pinter(1996), Sergeyev(2000)}). In this case, the
objective function (due to the Lipschitz condition (\ref{(1.2)}))
must lie above the intersection of the $N$-dimensional cones
$C_1(x,L)$ and $C_2(x,L)$ (see the two-dimensional example in
Fig.~\ref{fig_Estimate}a). These cones have the slope~$L$ and are
limited by the boundaries of the hyperinterval $D_i$. The vertices
of the cones (in $(N+1)$-dimensional space) are defined by the
coordinates $(a_i, f(a_i))$ and $(b_i, f(b_i))$, respectively. In
such a way, the lower bound of $f(x)$ is more precise with respect
to the central-sampling strategy. Algorithms using this approach
are called diagonal (see, e.g., \cite{Kvasov:et:al.(2003),
Kvasov&Sergeyev(2003), Pinter(1986), Pinter(1996),
Sergeyev(2000)}).

\begin{figure}[t]
\centerline{\epsfig{file=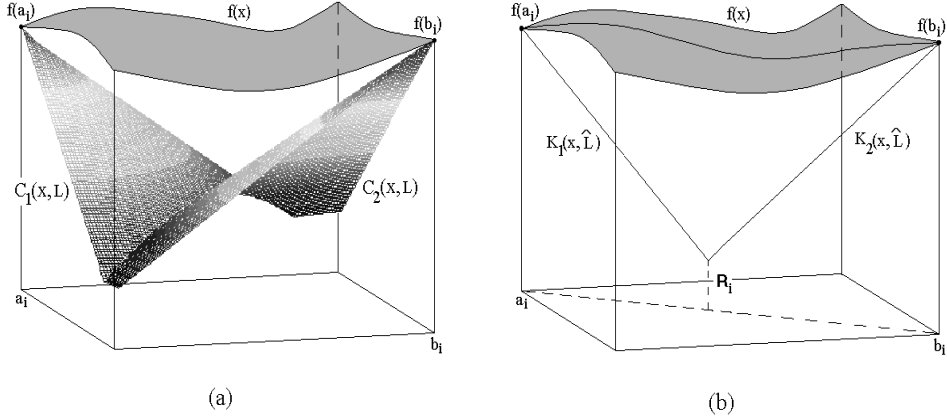,width=125mm,height=55mm,angle=0,silent=}}
\vspace{-1mm}\caption{Estimation of the lower bound of $f(x)$ over
an interval $D_i=[a_i, b_i]$.} \label{fig_Estimate}
\end{figure}

In the new diagonal algorithm proposed in this paper, the
objective function is also evaluated at two points of a
hyperinterval $D_i=[a_i, b_i]$. Instead of constructing a support
function for $f(x)$ over the whole hyperinterval $D_i$, we use a
support function for $f(x)$ only over the one-dimensional segment
$[a_i, b_i]$. This support function is the maximum of two linear
functions $K_1(x,\hat{L})$ and $K_2(x,\hat{L})$ passing with the
slopes~$\pm\hat{L}$ through the vertices $a_i$ and $b_i$ (see
Fig.~\ref{fig_Estimate}b). The lower bound of $f(x)$ over the
diagonal~$[a_i, b_i]$ of $D_i$ is calculated similarly to
\cite{Piyavskij(1972), Shubert(1972)} at the intersection of the
lines $K_1(x,\hat{L})$ and $K_2(x,\hat{L})$ and is given by the
following formula (see \cite{Kvasov:et:al.(2003), Pinter(1986),
Pinter(1996)})
 \be
   R_i = R_i(\hat{L}) = \frac{1}{2} (f(a_i)+f(b_i) - \hat{L} \|b_i - a_i \| ),
   \hspace{5mm} 0 < L \leq \hat{L} < \infty. \label{R_i}
 \ee

A valid estimate of the lower bound of $f(x)$ over $D_i$ can be
obtained from~(\ref{R_i}) if an overestimate $\hat{L}$ of the
Lipschitz constant $L$ is used. As it has been proved in
\cite{Pinter(1986), Pinter(1996)}, inequality
 \be
   \hat{L} \geq 2L \label{R(i)_Pinter}
 \ee
guarantees that the value $R_i$ from (\ref{R_i}) is the lower
bound of $f(x)$ over the whole hyperinterval $D_i$. Thus, the
lower bound of the objective function over the whole hyperinterval
$D_i \subseteq D$ can be estimated considering~$f(x)$ only along
the main diagonal $[a_i, b_i]$ of $D_i$.

A more precise than~(\ref{R(i)_Pinter}) condition ensuring that
 $$
   R_i(\hat{L}) \leq f(x), \hspace{5mm} x \in D_i,
 $$
is proved in the following theorem.

\begin{thm} \label{Theorem1}
Let $L$ be the known Lipschitz constant for $f(x)$ from
(\ref{(1.2)}), $D_i = [a_i, b_i]$ be a hyperinterval of a current
partition $\{D^k\}$, and $f^*_i$ be the minimum function value
over~$D_i$, i.e.,
 \be
  f^*_i=f(x^*_i), \hspace{5mm} x^*_i = \arg\min_{x \in D_i} f(x). \label{x*_on_D(i)}
 \ee
If an overestimate $\hat{L}$ in (\ref{R_i}) satisfies inequality
 \be
   \hat{L} \geq \sqrt{2}L, \label{R(i)_improved}
 \ee
then $R_i(\hat{L})$ from~(\ref{R_i}) is the lower bound of $f(x)$
over~$D_i$, i.e., $R_i(\hat{L}) \leq f^*_i$.
\end{thm}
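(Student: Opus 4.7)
The plan is to combine the Lipschitz inequality (\ref{(1.2)}) with a sharp geometric estimate of $\|x-a_i\|+\|x-b_i\|$ over the hyperrectangle $D_i$, the estimate being precisely what yields the improved constant $\sqrt{2}$ in (\ref{R(i)_improved}).

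First I would apply the Lipschitz condition from each endpoint of the main diagonal: for every $x \in D_i$,
\[
f(a_i)-f(x) \leq L\,\|x-a_i\|, \qquad f(b_i)-f(x) \leq L\,\|x-b_i\|.
\]
Summing these two inequalities and dividing by $2$ gives the pointwise lower bound
\[
f(x) \geq \frac{1}{2}\bigl(f(a_i)+f(b_i)\bigr) - \frac{L}{2}\bigl(\|x-a_i\|+\|x-b_i\|\bigr).
\]
Comparing this with the definition (\ref{R_i}) of $R_i(\hat{L})$, the inequality $R_i(\hat{L})\leq f(x)$ holds for all $x\in D_i$ as soon as $\hat{L}\,\|b_i-a_i\|\geq L\,(\|x-a_i\|+\|x-b_i\|)$ for every such $x$. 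Hence the proof reduces to the purely geometric inequality
\[
\|x-a_i\|+\|x-b_i\| \leq \sqrt{2}\,\|b_i-a_i\|, \qquad x \in D_i,
\]
whereupon (\ref{R(i)_improved}) yields $R_i(\hat{L}) \leq f(x)$ for every $x\in D_i$, and taking the infimum gives $R_i(\hat{L})\leq f^*_i$.

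The geometric inequality, which I expect to be the real content of the theorem, I would prove coordinatewise. Since $a_i(j)\leq x(j)\leq b_i(j)$ for each $j$, the product $(x(j)-a_i(j))(b_i(j)-x(j))$ is nonnegative, and expanding gives the per-coordinate bound
\[
(x(j)-a_i(j))^2 + (x(j)-b_i(j))^2 \leq (b_i(j)-a_i(j))^2.
\]
Summing on $j$ yields $\|x-a_i\|^2+\|x-b_i\|^2 \leq \|b_i-a_i\|^2$. Applying the elementary estimate $(\alpha+\beta)^2 \leq 2(\alpha^2+\beta^2)$ with $\alpha=\|x-a_i\|$ and $\beta=\|x-b_i\|$ then produces the desired bound.

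The main obstacle is precisely this geometric inequality: obtaining the constant $\sqrt{2}$ rather than the cruder $2$ from (\ref{R(i)_Pinter}) requires genuine use of the hyperrectangle structure of $D_i$, which the coordinatewise expansion exploits; a naive argument using the triangle inequality only recovers $\hat{L} \geq 2L$. Everything else — the Lipschitz reduction at the start and the Cauchy--Schwarz step $(\alpha+\beta)^2\leq 2(\alpha^2+\beta^2)$ — is routine, and the constant $\sqrt{2}$ is sharp, since equality holds at any vertex $x^v$ of $D_i$ for which the partial sums of $(b_i(j)-a_i(j))^2$ over the coordinates where $x^v(j)=a_i(j)$ and where $x^v(j)=b_i(j)$ coincide.
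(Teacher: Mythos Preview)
Your proof is correct and follows the same line as the paper's: apply the Lipschitz condition at both diagonal endpoints, sum, and invoke the geometric bound $\|x-a_i\|+\|x-b_i\|\leq\sqrt{2}\,\|b_i-a_i\|$ on the hyperrectangle. The paper simply cites this bound from~\cite[Lemma~2]{Molinaro:et:al.(2001)} without proof, whereas your coordinatewise derivation of it makes the argument self-contained.
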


{\em Proof.} Since $x^*_i$ from~(\ref{x*_on_D(i)}) belongs to
$D_i$ and $f(x)$ satisfies the Lipschitz condition~(\ref{(1.2)})
over~$D_i$, then the following inequalities hold
 $$
   f(a_i) - f^*_i \leq L \| a_i-x^*_i \|,
 $$
 $$
   f(b_i) - f^*_i \leq L \| b_i-x^*_i \|.
 $$
By summarizing these inequalities and using the following result
from~\cite[Lemma 2]{Molinaro:et:al.(2001)}
 $$
  \max_{x \in D_i}(\| a_i-x \| + \|b_i -x \|) \leq
  \sqrt{2}\| b_i-a_i \|,
 $$
we obtain
 $$
  f(a_i)+f(b_i) \leq 2f^*_i + L(\| a_i-x^*_i\| + \|b_i -x^*_i\|)\leq
 $$
 $$
  \leq 2f^*_i + L \max_{x \in D_i}(\| a_i-x \| + \|b_i -x \|)
  \leq 2f^*_i + \sqrt{2}L\| b_i-a_i \|.
 $$
Then, from the last inequality and (\ref{R(i)_improved}) we can
deduce that the following estimate holds for the value $R_i$ from
(\ref{R_i})
 $$
  R_i(\hat{L}) \leq \frac{1}{2}(2f^*_i + \sqrt{2}L \|b_i-a_i\| -
  \hat{L} \|b_i-a_i\|) =
 $$
 $$
  = f^*_i + \frac{1}{2}\underbrace{(\sqrt{2}L -\hat{L})}_{\leq 0}\|b_i - a_i \|
  \leq f^*_i. \qquad\endproof
 $$

Theorem~\ref{Theorem1} allows us to obtain a more precise lower
bound $R_i$ with respect to \cite{Pinter(1986), Pinter(1996)}
where estimate (\ref{R(i)_Pinter}) is considered.

\subsection{Finding non-dominated hyperintervals} \label{sectionNonDominated}

Let us now consider a diagonal partition $\{D^k\}$ of the
admissible region $D$, generated by the new subdivision strategy
from Section~\ref{sectionNewStrategy}. Let a positive value
$\tilde{L}$ be chosen as an estimate of the Lipschitz constant $L$
from~(\ref{(1.2)}) and lower bounds $R_i(\tilde{L})$ of the
objective function over hyperintervals $D_i \in \{D^k\}$ be
calculated by formula~(\ref{R_i}). Using the obtained lower bounds
of $f(x)$, the relation of domination can be established between
every two hyperintervals of a current partition $\{D^k\}$ of $D$.

\begin{dfn} \label{DefNonDominationL}
Given an estimate $\tilde{L}>0$ of the Lipschitz constant~$L$
from~(\ref{(1.2)}), a hyperinterval $D_i \in \{D^k\}$ {\rm
dominates} a hyperinterval $D_j \in \{D^k\}$ with respect to
$\tilde{L}$ if
$$
 R_i(\tilde{L}) < R_j(\tilde{L}).
$$

A hyperinterval $D_t \in \{D^k\}$ is said {\rm non-dominated with
respect to} $\tilde{L}>0$ if for the chosen value $\tilde{L}$
there is no other hyperinterval in~$\{D^k\}$ which dominates
$D_t$.
\end{dfn}

Each hyperinterval $D_i=[a_i, b_i] \in \{D^k\}$ can be represented
by a dot in a two-dimensional diagram (see Fig.~\ref{fig_Diagram})
similar to that used in DIRECT for representing hyperintervals
with $f(x)$ evaluated only at one point. The horizontal coordinate
$d_i$ and the vertical coordinate $F_i$ of the dot are defined as
follows
 \be \label{DiagramCoords}
   d_i = \frac{\| b_i - a_i \|}{2}, \hspace{3mm}
   F_i = \frac{f(a_i) + f(b_i)}{2}, \hspace{3mm} a_i \neq b_i.
 \ee

Note that a point ($d_i, F_i$) in the diagram can correspond to
several hyperintervals with the same length of the main diagonals
and the same sum of the function values at their vertices.

\begin{figure}[t]
\centerline{\epsfig{file=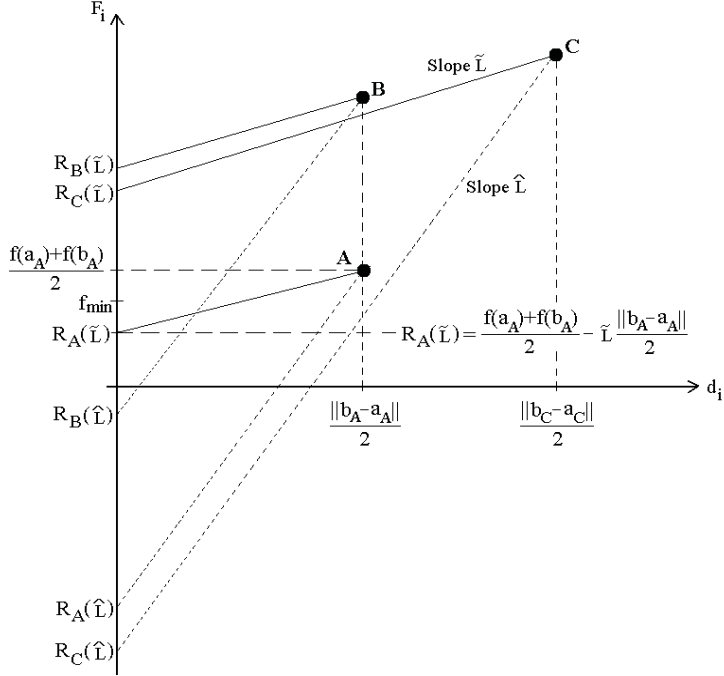,width=95mm,height=90mm,angle=0,silent=}}
\caption{Graphical interpretation of lower bounds of $f(x)$ over
hyperintervals.} \label{fig_Diagram}
\end{figure}

For the sake of illustration, let us consider a hyperinterval
$D_A$ with the main diagonal $[a_A, b_A]$. This hyperinterval is
represented by the dot~$A$ in Fig.~\ref{fig_Diagram}. Assuming an
estimate of the Lipschitz constant equal to $\tilde{L}$ (such that
condition~(\ref{R(i)_improved}) is satisfied), a lower bound
of~$f(x)$ over the hyperinterval $D_A$ is given by the value
$R_A(\tilde{L})$ from~(\ref{R_i}). This value is the vertical
coordinate of the intersection point of the line passed through
the point $A$ with the slope $\tilde{L}$ and the vertical
coordinate axis (see Fig.~\ref{fig_Diagram}). In fact, as it can
be seen from~(\ref{R_i}), intersection of the line with the slope
$\tilde{L}$ passed through any dot representing a hyperinterval in
the diagram of Fig.~\ref{fig_Diagram} and the vertical coordinate
axis gives us the lower bound~(\ref{R_i}) of $f(x)$ over the
corresponding hyperinterval.

Note that the points on the vertical axis ($d_i=0$) do not
represent any hyperinterval. The axis is used to express such
values as lower bounds, the current minimum value of the function,
etc. It should be highlighted that the current best value
$f_{min}$ is always smaller than or equal to the vertical
coordinate of the lowest dot (dot $A$ in Fig.~\ref{fig_Diagram}).
Note also that the vertex at which this value has been obtained
can belong to a hyperinterval, different from that represented by
the lowest dot in the diagram.

By using this graphical representation, it is easy to determine
whether a hyperinterval dominates (with respect to a given
estimate of the Lipschitz constant) some other hyperinterval from
a partition~$\{D^k\}$. For example, for the estimate $\tilde{L}$
the following inequalities are satisfied
(see~Fig.~\ref{fig_Diagram})
 $$
  R_A(\tilde{L}) < R_C(\tilde{L}) < R_B(\tilde{L}).
 $$
Therefore, with respect to $\tilde{L}$ the hyperinterval $D_A$
(dot $A$ in Fig.~\ref{fig_Diagram}) dominates both hyperintervals
$D_B$ (dot $B$) and $D_C$ (dot $C$), while $D_C$ dominates $D_B$.
If our partition~$\{D^k\}$ consists only of these three
hyperintervals, the hyperinterval $D_A$ is non-dominated with
respect to $\tilde{L}$.

If a higher estimate $\hat{L} > \tilde{L}$ of the Lipschitz
constant is considered (see Fig.~\ref{fig_Diagram}), the
hyperinterval $D_A$ still dominates the hyperinterval $D_B$ with
respect to $\hat{L}$, since $R_A (\hat{L}) < R_B(\hat{L})$. But
$D_A$ in its turn is dominated by the hyperinterval~$D_C$ with
respect to $\hat{L}$, because $R_A(\hat{L}) > R_C (\hat{L})$ (see
Fig.~\ref{fig_Diagram}). Thus, for the chosen estimate~$\hat{L}$
the unique non-dominated hyperinterval with respect to $\hat{L}$
is $D_C$, and not $D_A$ as previously.

As we can see from this simple example, some hyperintervals (as
the hyperinterval~$D_B$ in Fig.~\ref{fig_Diagram}) are always
dominated by another hyperintervals, independently on the chosen
estimate of the Lipschitz constant $L$. The following result
formalizing this fact takes place.

 \begin{lem} \label{Lemma1}
Given a partition $\{D^k\}$ of $D$ and the subset $\{D^k\}_d$ of
hyperintervals having the main diagonals equal to $d>0$, for any
estimate $\tilde{L}>0$ of the Lipschitz constant a hyperinterval
$D_t \in \{D^k\}_d$ dominates a hyperinterval $D_j \in \{D^k\}_d$
if and only if
 \be \label{ConditionLemma1}
  F_t = \min \{F_i: \hspace{2mm} D_i \in \{D^k\}_d \, \} < F_j,
 \ee
where $F_i$ and $F_j$ are from~(\ref{DiagramCoords}).
 \end{lem}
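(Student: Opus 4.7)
The plan is to exploit the fact that all hyperintervals in $\{D^k\}_d$ share, by definition, the same main-diagonal length $\|b_i-a_i\|=d$. First I would substitute $\|b_i-a_i\|=d$ into formula~(\ref{R_i}), obtaining for every $D_i \in \{D^k\}_d$
$$
R_i(\tilde L) \;=\; \frac{f(a_i)+f(b_i)}{2} - \frac{\tilde L\,d}{2} \;=\; F_i - \frac{\tilde L d}{2},
$$
with $F_i$ as in~(\ref{DiagramCoords}). The point of this step is that, restricted to $\{D^k\}_d$, the lower bound $R_i(\tilde L)$ is an affine, strictly increasing function of $F_i$, with slope~$1$ and with an $\tilde L$-dependent offset that is the \emph{same} for every member of the subset.

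Second, I would read off from this the key equivalence
$$
R_t(\tilde L) < R_j(\tilde L) \;\iff\; F_t < F_j \qquad \text{for all } D_t, D_j \in \{D^k\}_d,
$$
which holds for every estimate $\tilde L > 0$. Plugged into Definition~\ref{DefNonDominationL}, this equivalence converts the notion of domination restricted to $\{D^k\}_d$ into the natural ordering of the $F_i$-values, and crucially does so uniformly in $\tilde L$.

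The two directions of~(\ref{ConditionLemma1}) then follow at once. If $F_t = \min\{F_i : D_i \in \{D^k\}_d\} < F_j$, then $R_t(\tilde L) < R_j(\tilde L)$ for every $\tilde L > 0$, hence $D_t$ dominates $D_j$ by Definition~\ref{DefNonDominationL}. Conversely, if $D_t$ dominates every other hyperinterval of $\{D^k\}_d$, the equivalence forces $F_t < F_i$ for each $D_i \in \{D^k\}_d$ with $i\neq t$; in particular $F_t < F_j$, and $F_t$ is the strict minimum of the $F_i$-values over the subset.

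I do not anticipate any real obstacle: the entire content of the lemma is the affine reduction $R_i(\tilde L) = F_i - \tilde L d/2$ on a family with common diagonal~$d$, after which the rest is bookkeeping. The only subtlety worth emphasising is that this reduction is \emph{independent} of $\tilde L$, so the ``winning'' hyperinterval within $\{D^k\}_d$ does not change as $\tilde L$ ranges over $(0,\infty)$; it is precisely this invariance that lets the condition in~(\ref{ConditionLemma1}) be stated without any reference to the specific estimate of the Lipschitz constant.
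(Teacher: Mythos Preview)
Your proposal is correct and follows essentially the same route as the paper's own proof, which simply notes that the lemma ``follows immediately from~(\ref{R_i}) since all hyperintervals under consideration have the same length of their main diagonals.'' You have merely spelled out in detail the affine reduction $R_i(\tilde L)=F_i-\tilde Ld/2$ that the paper leaves implicit.
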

 \begin{proof}
The lemma follows immediately from (\ref{R_i}) since all
hyperintervals under consideration have the same length of their
main diagonals, i.e., $\| b_i - a_i \| = d$. \qquad
 \end{proof}

There also exist hyperintervals (for example, the hyperintervals
$D_A$ and $D_C$ represented in Fig.~\ref{fig_Diagram} by the dots
$A$ and $C$, respectively) that are non-dominated with respect to
one estimate of the Lipschitz constant $L$ and dominated with
respect to another estimate of $L$. Since in practical
applications the exact Lipschitz constant (or its valid
overestimate) is often unknown, the following idea inspired by
DIRECT~\cite{Jones:et:al.(1993)} is adopted.

At each iteration $k>1$ of the new algorithm, various estimates of
the Lipschitz constant~$L$ from zero to infinity are chosen for
lower bounding $f(x)$ over hyperintervals. The lower bound of
$f(x)$ over a particular hyperinterval is calculated by
formula~(\ref{R_i}). Note that since all possible values of the
Lipschitz constant are considered, condition~(\ref{R(i)_improved})
is automatically satisfied and no additional multipliers are
required for an estimate of the Lipschitz constant in~(\ref{R_i}).
Examination of the set of possible estimates of the Lipschitz
constant leads us to the following definition.

\begin{dfn} \label{DefNonDomination}
A hyperinterval $D_t \in \{D^k\}$ is called {\rm non-dominated} if
there exists an estimate $0 < \tilde{L} <\infty$ of the Lipschitz
constant~$L$ such that $D_t$ is non-dominated with respect to
$\tilde{L}$.
\end{dfn}

In other words, non-dominated hyperintervals are hyperintervals
over which $f(x)$ has the smallest lower bound for some particular
estimate of the Lipschitz constant. For example, in
Fig.~\ref{fig_Diagram} the hyperintervals $D_A$ and $D_C$ are
non-dominated.

Let us now make some observations that allow us to identify the
set of non-dominated hyperintervals. First of all, only
hyperintervals $D_t$ satisfying condition (\ref{ConditionLemma1})
can be non-dominated. In two-dimensional diagram $(d_i, F_i)$,
where $d_i$ and $F_i$ are from~(\ref{DiagramCoords}), such
hyperintervals are located at the bottom of each group of points
with the same horizontal coordinate, i.e., with the same length of
the main diagonals. For example, in Fig.~\ref{fig_Optimal} these
points are designated as $A$ (the largest interval), $B$, $C$,
$E$, $F$, $G$, and $H$ (the smallest interval).

\begin{figure}[t]
\centerline{\epsfig{file=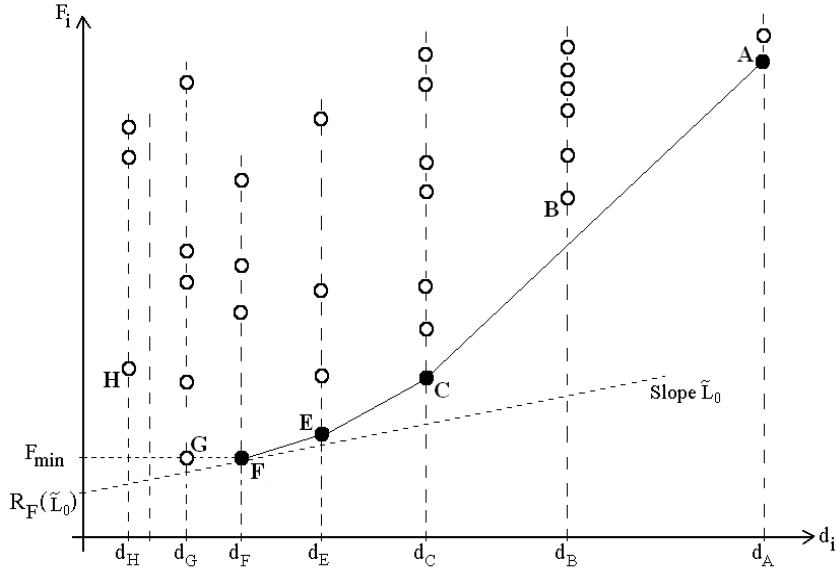,width=110mm,height=75mm,angle=0,silent=}}
\vspace{-2 mm}\caption{Dominated hyperintervals are represented by
white dots and non-dominated hyperintervals are represented by
black dots.} \label{fig_Optimal}
\end{figure}

It is important to notice that not all hyperintervals
satisfying~(\ref{ConditionLemma1}) are non-dominated. For example
(see Fig.~\ref{fig_Optimal}), the hyperinterval $D_H$ is dominated
(with respect to any positive estimate of the Lipschitz constant
$L$) by any of the hyperintervals $D_G$, $D_F$, or $D_E$. The
hyperinterval $D_G$ is dominated by $D_F$. In fact, as it follows
from~(\ref{R_i}), among several hyperintervals with the same sum
of the function values at their vertices, larger hyperintervals
dominate smaller ones with respect to any positive estimate of
$L$. Finally, the hyperinterval $D_B$ is dominated either by the
hyperinterval~$D_A$ (for example, with respect to $\tilde{L}_1
\geq \tilde{L}_{AC}$, where $\tilde{L}_{AC}$ corresponds to the
slope of the line passed through the points $A$ and $C$ in
Fig.~\ref{fig_Optimal}), or by the hyperinterval $D_C$ (with
respect to $\tilde{L}_2 < \tilde{L}_{AC}$).

Note that if an estimate $\tilde{L}$ of the Lipschitz constant is
chosen, it is easy to indicate the hyperinterval with the smallest
lower bound of $f(x)$, that is, non-dominated hyperinterval with
respect to $\tilde{L}$. To do this, it is sufficient to position a
line with the slope $\tilde{L}$ below the set of dots in
two-dimensional diagram representing hyperintervals of $\{D^k\}$,
and then to shift it upwards. The first dot touched by the line
indicates the desirable hyperinterval. For example, in
Fig.~\ref{fig_Optimal} the hyperinterval $D_F$ represented by the
point~$F$ is non-dominated hyperinterval with respect to
$\tilde{L}_0$, since over this hyperinterval $f(x)$ has the
smallest lower bound $R_F(\tilde{L}_0)$ for the given estimate
$\tilde{L}_0$ of the Lipschitz constant.

Let us now examine various estimates of the Lipschitz constant $L$
from zero to infinity. When a small (close to zero) positive
estimate of $L$ is chosen, an almost horizontal line is considered
in two-dimensional diagram representing hyperintervals of a
partition~$\{D^k\}$. The dot with the smallest vertical coordinate
$F_{min}$ (and the biggest horizontal coordinate if there are
several such dots) is the first to be touched by this line (the
case of the dot $F$ in Fig.~\ref{fig_Optimal}). Therefore,
hyperinterval (or hyperintervals) represented by this dot is
non-dominated with respect to the chosen estimate of $L$, and,
consequently, non-dominated in sense of
Def.~\ref{DefNonDomination}. Repeating such a procedure with
higher estimates of the Lipschitz constant (that is, considering
lines with higher slopes), all non-dominated hyperintervals can be
identified. In Fig.~\ref{fig_Optimal} the hyperintervals
represented by the dots $F$, $E$, $C$, and $A$ are non-dominated
hyperintervals.

This procedure can be formalized in terms of the algorithm known
as Jarvis march (or gift wrapping; see, e.g.,
\cite{Preparata&Shamos(1993)}), which is an algorithm for
identifying the convex hull of the dots. Thus, the following
result identifying the set of non-dominated hyperintervals for a
given partition~$\{D^k\}$ has been proved.

\begin{thm} \label{Theorem2}
Let each hyperinterval $D_i=[a_i, b_i] \in \{D^k\}$ be represented
by a dot with horizontal coordinate $d_i$ and vertical coordinate
$F_i$ defined in~(\ref{DiagramCoords}). Then, non-dominated in
sense of Def.~\ref{DefNonDomination} hyperintervals are located on
the lower-right convex hull of the set of dots representing the
hyperintervals.
\end{thm}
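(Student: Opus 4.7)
The plan is to rewrite the non-domination condition in terms of the coordinates $(d_i,F_i)$ from~(\ref{DiagramCoords}) and then recognize it as the defining property of vertices of a convex hull.

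First I would substitute (\ref{DiagramCoords}) into (\ref{R_i}) to get the compact form
$R_i(\tilde{L}) = F_i - \tilde{L}\,d_i$.
By Definition~\ref{DefNonDominationL}, a hyperinterval $D_t \in \{D^k\}$ is non-dominated with respect to a given $\tilde{L}>0$ if and only if
$F_t - \tilde{L}\,d_t \leq F_j - \tilde{L}\,d_j$ for every $D_j \in \{D^k\}$.
Geometrically, $F_i - \tilde{L}\,d_i$ is the vertical intercept of the line of slope $\tilde{L}$ passing through the dot $(d_i,F_i)$; so $D_t$ is non-dominated with respect to $\tilde{L}$ exactly when the line of slope $\tilde{L}$ through $(d_t,F_t)$ is a lower supporting line of the finite set $S=\{(d_i,F_i):D_i\in\{D^k\}\}$, that is, when it passes weakly below all dots of $S$.

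Next I would invoke the standard fact that for a finite planar set $S$, a point $p\in S$ admits a lower supporting line of slope $\tilde{L}$ if and only if $p$ is a vertex of the lower convex hull of $S$ and $\tilde{L}$ lies in the (possibly degenerate) slope interval determined by the two hull edges incident to $p$. Passing to Definition~\ref{DefNonDomination}, $D_t$ is non-dominated precisely when the point $(d_t,F_t)$ admits at least one lower supporting line of strictly positive slope. Restricting the parameter $\tilde{L}$ to $(0,\infty)$ selects exactly the vertices of the lower hull whose supporting-slope interval meets $(0,\infty)$; these are the vertices on the portion of the lower hull where the boundary is non-increasing in $F$ as $d$ increases, i.e., on the \emph{lower-right} portion of the convex hull of $S$. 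Conversely, every vertex on this lower-right portion has a positive supporting slope and is therefore non-dominated by the preceding argument, and Lemma~\ref{Lemma1} guarantees that dots strictly above the hull (in particular duplicates at the same $d$-value that are not minimal in $F$) correspond to dominated hyperintervals.

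Finally, I would remark that the Jarvis march (gift wrapping) algorithm of \cite{Preparata&Shamos(1993)} is precisely a procedure that, started from the rightmost-lowest dot and rotating a supporting line upward through increasing positive slopes, enumerates exactly these extreme points; this is the algorithmic content already sketched in the paragraphs immediately preceding the theorem, and it both constructs the claimed set and certifies its equivalence with the analytic characterization above. The main subtlety — the step I expect to require the most care — is the handling of degeneracies: several hyperintervals may share the same dot $(d_i,F_i)$, and several distinct dots may share the same abscissa $d_i$, so the statement should be read as saying that the \emph{representing dots} of the non-dominated hyperintervals lie on the lower-right convex hull, with Lemma~\ref{Lemma1} used to dispose of ties within a fixed diagonal length.
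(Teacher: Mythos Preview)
Your proposal is correct and follows essentially the same route as the paper: the paper's proof is the geometric discussion immediately preceding the theorem, where the identity $R_i(\tilde L)=F_i-\tilde L\,d_i$ is interpreted as the vertical intercept of a line of slope~$\tilde L$ through $(d_i,F_i)$, non-domination with respect to~$\tilde L$ is read as that line supporting the dot set from below, and sweeping $\tilde L$ over $(0,\infty)$ is identified with the Jarvis march over the lower-right hull. Your write-up makes the analytic step $R_i(\tilde L)=F_i-\tilde L\,d_i$ and the supporting-line characterization explicit, and your handling of ties via Lemma~\ref{Lemma1} matches the paper's; the only cosmetic point is that collinear hull dots need not be extreme points, so ``on the lower-right convex hull'' (as the theorem says) is the right phrasing rather than ``vertices of''.
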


We conclude this theoretical consideration by the following
remark. As it has been shown in~\cite{Sergeyev(2000)}, the lengths
of the main diagonals of hyperintervals generated by the new
subdivision strategy from Section~\ref{sectionNewStrategy} are not
arbitrary, contrarily to traditional diagonal schemes (see, e.g.,
\cite{Kvasov:et:al.(2003), Molinaro:et:al.(2001), Pinter(1986),
Pinter(1996)}). They are members of a sequence of values depending
both on the size of the initial hypercube $D=[a,b]$ and on the
number of executed subdivisions. In such a way, the hyperintervals
of a current partition $\{D^k\}$ form several groups. Each group
is characterized by the length of the main diagonals of
hyperintervals within the group. In two-dimensional diagram ($d_i,
F_i$), where $d_i$ and~$F_i$ are from~(\ref{DiagramCoords}), the
hyperintervals from a group are represented by dots with the same
horizontal coordinate $d_i$. For example, in
Fig.~\ref{fig_Optimal} there are seven different groups of
hyperintervals with the horizontal coordinates equal to $d_A$,
$d_B$, $d_C$, $d_E$, $d_F$, $d_G$, and $d_H$. Note that some
groups of a current partition can be empty (see, e.g., the group
with the horizontal coordinate between $d_H$ and $d_G$ in
Fig.~\ref{fig_Optimal}). These groups correspond to diagonals
which are not present in the current partition, but can be created
(or were created) at the successive (previous) iterations of the
algorithm.

It is possible to demonstrate (see~\cite{Sergeyev(2000)}) that
there exists a correspondence between the length of the main
diagonal of a hyperinterval $D_i \in \{D^k\}$ and a non-negative
integer number. This number indicates how many partitions have
been performed starting from the initial hypercube $D$ to obtain
the hyperinterval~$D_i$. At each iteration $k \geq 1$ it can be
considered as an index $l=l(k)$ of the corresponding group of
hyperintervals having the same length of their main diagonals,
where
 \be \label{GroupIndex}
  0 \leq q(k) \leq l(k) \leq Q(k) < +\infty
 \ee
and $q(k)=q$ and $Q(k)=Q$ are indices corresponding to the groups
of the largest and smallest hyperintervals of $\{D^k\}$,
respectively. When the algorithm starts, there exists only one
hyperinterval
--- the admissible region~$D$ --- which belongs to the group with
the index $l=0$. In this case, both indices $q$ and $Q$ are equal
to zero. When a hyperinterval $D_i \in \{D^k\}$ from a group
$l'=l'(k)$ is subdivided, all three generated hyperintervals are
placed into the group with the index $l'+1$. Thus, during the work
of the algorithm, diagonals of hyperintervals become smaller and
smaller, while the corresponding indices of groups of
hyperintervals grow consecutively starting from zero.

For example, in Fig.~\ref{fig_Optimal} there are seven non-empty
groups of hyperintervals of a partition $\{D^k\}$ and one empty
group. The index $q(k)$ (index $Q(k)$) corresponds to the group of
the largest (smallest) hyperintervals represented in
Fig.~\ref{fig_Optimal} by dots with the horizontal coordinate
equal to $d_A$ ($d_H$). For Fig.~\ref{fig_Optimal} we have
$Q(k)=q(k)+7$. The empty group has the index $l(k)=Q(k)-1$.
Suppose that the hyperintervals $D_A$, $D_H$ and~$D_G$
(represented in Fig.~\ref{fig_Optimal} by the dots $A$, $H$, and
$G$, respectively) will be subdivided at the $k$-th iteration. In
this case, the smallest index will remain the same, i.e.,
$q(k+1)=q(k)$, since the group of the largest hyperintervals will
not be empty, while the biggest index will increase, i.e., $Q(k+1)
= Q(k)+1$, since a new group of the smallest hyperintervals will
be created. The previously empty group $Q(k)-1$ will be filled up
by the new hyperintervals generated by partitioning the
hyperinterval $D_G$ and will have the index $l(k+1)=Q(k+1)-2$.

\section{New algorithm}

In this section, a new algorithm for solving problem
(\ref{(1.1)})--(\ref{(1.3)}) is described. First, the new
algorithm is presented and briefly commented. Then its convergence
properties are analyzed.

The new algorithm is oriented on solving difficult
multidimensional multiextremal problems. To accomplish this task,
a two-phase approach consisting of explicitly defined global and
local phases is proposed. It is well known that DIRECT also
balances global and local information during its work. However,
the local phase is too pronounced in this balancing. As has been
already mentioned in Introduction, DIRECT executes too many
function trials in regions of local optima and, therefore,
manifests too slow convergence to the global minimizers when the
objective function has many local minimizers.

In the new algorithm, when a sufficient number of subdivisions of
hyperintervals near the current best point has been performed, the
two-phase approach forces the new algorithm to switch to the
exploration of large hyperintervals that could contain better
solutions. Since many subdivisions have been executed around the
current best point, its neighborhood contains only small
hyperintervals and large ones can be located only far from it.
Thus, the new algorithm balances global and local search in a more
sophisticated way trying to provide a faster convergence to the
global minimizers of difficult multiextremal functions.

Thus, the new algorithm consists of the following two phases:
local improvement of the current best function value (local phase)
and examination of large unexplored hyperintervals in pursuit of
new attraction regions of deeper local minimizers (global phase).
Each of these phases can consist of several iterations. During the
local phase the algorithm tries to explore better the subregion
around the current best point. This phase finishes when the
following two conditions are verified: (i) an improvement on at
least $1\%$ of the minimal function value is not more reached and
(ii) a hyperinterval containing the current best point becomes the
smallest one. After the end of the local phase the algorithm is
switched to the global phase.

The global phase consists of subdividing mainly large
hyperintervals, located possibly far from the current best point.
It is performed until a function value improving the current
minimal value on at least $1\%$ is obtained. When this happens,
the algorithm switches to the local phase during which the
obtained new solution is improved locally. During its work the
algorithm can switch many times from the local phase to the global
one. The algorithm stops when the number of generated trial points
reaches the maximal allowed number.

We assume without loss of generality that the admissible region
$D=[a,b]$ in~(\ref{(1.3)}) is an $N$-dimensional hypercube.
Suppose that at the iteration $k \geq 1$ of the algorithm a
partition $\{D^k\}$ of $D=[a,b]$ has been obtained by the
partitioning procedure
from~(\ref{PartitionD})--(\ref{partition:Delta_m}). Suppose also
that each hyperinterval $D_i \in \{D^k\}$ is represented by a dot
in the two-dimensional diagram $(d_i, F_i)$, where $d_i$ and $F_i$
are from~(\ref{DiagramCoords}), and the groups of hyperintervals
with the same length of their main diagonals are numerated by
indices within a range from $q(k)$ up to $Q(k)$
from~(\ref{GroupIndex}).

To describe the algorithm formally, we need the following
additional designations:

$f_{min}(k)$ -- the best function value (the term `record' will be
also used) found after~$k-1$ iterations.

$x_{min}(k)$ -- coordinates of $f_{min}(k)$.

$D_{min}(k)$ -- the hyperinterval containing the point
$x_{min}(k)$ (if $x_{min}(k)$ is a common vertex of several --- up
to $2^N$ --- hyperintervals, then the smallest hyperinterval is
considered).

$f_{min}^{prec}$ -- the old record. It serves to memorize the
record $f_{min}(k)$ at the start of the current phase (local or
global). The value of $f_{min}^{prec}$ is updated when an
improvement of the current record on at least~$1\%$ is obtained.

$\xi$ -- the parameter of the algorithm, $\xi \geq 0$. It prevents
the algorithm from subdividing already well-explored small
hyperintervals. If $D_t \in \{D^k\}$ is a non-dominated
hyperinterval with respect to an estimate $\tilde{L}$ of the
Lipschitz constant $L$, then this hyperinterval can be subdivided
at the $k$-th iteration only if the following condition is
satisfied
 \be  \label{ConditionOfImprovement}
   R_t(\tilde{L}) \leq f_{min} (k) - \xi,
 \ee
where the lower bound $R_t(\tilde{L})$ is calculated by
formula~(\ref{R_i}). The value of $\xi$ can be set in different
ways (see Section~\ref{sectionResults}).

$T_{max}$ -- the maximal allowed number of trial points that the
algorithm may generate. The algorithm stops when the number of
generated trial points reaches $T_{max}$. During the course of the
algorithm the satisfaction of this termination criterion is
verified after every subdivision of a hyperinterval.

$Lcounter$, $Gcounter$ -- the counters of iterations executed
during the current local and global phases, respectively.

$p(k)$ -- the index of the group the hyperinterval $D_{min}(k)$
belongs to. Notice that the inequality $q(k) \leq p(k) \leq Q(k)$
is satisfied for any iteration number $k$. Since both local and
global phases can embrace more than one iteration, the index
$p(k)$ (as well as the indices $q(k)$ and $Q(k)$) can change
(namely, increase) during these phases. Note also that the
group~$p(k)$ can be different from the groups containing
hyperintervals with the smallest sum of the objective function
values at their vertices (see two groups of hyperintervals
represented in Fig.~\ref{fig_Optimal} by the horizontal
coordinates equal to $d_G$ and~$d_F$). Moreover, the hyperinterval
$D_{min}(k)$ is not represented necessarily by the `lowest' point
from the group $p(k)$ in the two-dimensional diagram $(d_i, F_i)$
-- even if the current best function value is obtained at a vertex
of $D_{min}(k)$, the function value at the other vertex can be too
high and the sum of these two values can be greater than the
corresponding value of another hyperinterval from the group
$p(k)$.

$p'$ -- the index of the group containing the hyperinterval
$D_{min}(k)$ at the start of the current phase (local or global).
Hyperintervals from the groups with indices greater than $p'$ are
not considered when non-dominated hyperintervals are looked for.
Whereas the index $p(k)$ can assume different values during the
current phase, the index $p'$ remains, as a rule, invariable. It
is changed only when it violates the left part of
condition~(\ref{GroupIndex}). This can happen when groups with the
largest hyperintervals disappear and, therefore, the index $q(k)$
increases and becomes equal to $p'$. In this case, the index $p'$
increases jointly with~$q(k)$.

$p''$ -- the index of the group immediately preceding the group
$p'$, i.e., $p''=p'-1$. This index is used within the local phase
and can increase if $q(k)$ increases during this phase.

$r'$ -- the index of the middle group of hyperintervals between
the groups $p'$ and~$q(k)$, i.e., $r'=\lceil(q(k)+p')/2\rceil$.
This index is used within the global phase as a separator between
the groups of large and small hyperintervals. It can increase
if~$q(k)$ increases during this phase.

To clarify the introduced group indices, let us consider an
example of a partition~$\{D^k\}$ represented by the
two-dimensional diagram in Fig.~\ref{fig_Optimal}. Let us suppose
that the index $q(k)$ of the group of the largest hyperintervals
corresponding to the points with the horizontal coordinate $d_A$
in Fig.~\ref{fig_Optimal} is equal to 10. The index $Q(k)$ of the
group of the smallest hyperintervals with the main diagonals equal
to $d_H$ (see Fig.~\ref{fig_Optimal}) is equal to $Q(k)=q(k)+7 =
17$. Let us also assume that the hyperinterval~$D_{min}(k)$
belongs to the group of hyperintervals with the main diagonals
equal to~$d_G$ (see Fig.~\ref{fig_Optimal}). In this case, the
index $p(k)$ is equal to 15 and the index $p'$ is equal to 15 too.
The index $p''=15-1 = 14$ and it corresponds to the group of
hyperintervals represented in Fig.~\ref{fig_Optimal} by the dots
with the horizontal coordinate $d_F$. Finally, the index
$r'=\lceil(10+15)/2\rceil = 13$ and it corresponds to
hyperintervals with the main diagonals equal to $d_E$. The indices
$p'$, $p''$, and $r'$ can change only if the index $q(k)$
increases. Otherwise, they remain invariable during the iterations
of the current phase (local or global). At the same time, the
index $p(k)$ can change at every iteration, as soon as a new best
function value belonging to a hyperinterval of a group different
from $p(k)$ is obtained.

\vspace{2mm} Now we are ready to present a formal scheme of the
new algorithm. \vspace{2mm}

\begin{description}
  \item[{\bf Step 1: Initialization.}] Set the current iteration number $k:=1$,
 the current record $f_{min}(k):=\min\{f(a),f(b)\}$ where $a$ and $b$ are from~(\ref{(1.3)}).
 Set group indices $q(k):=Q(k):=p(k):=0$.
 \item[{\bf Step 2: Local Phase.}] Memorize the current record
 $f_{min}^{prec}:=f_{min}(k)$ and  perform the following steps:

 \begin{description}

  \item[{\bf Step 2.1.}] Set $Lcounter := 1$ and fix the group index $p':=p(k)$.

  \item[{\bf Step 2.2.}] Set $p'' := \max\{p'-1, q(k)\}$.

  \item[{\bf Step 2.3.}] Determine non-dominated
  hyperintervals considering only groups of hyperintervals with the indices
  from $q(k)$ up to $p''$. Subdivide those non-dominated hyperintervals which satisfy
  inequality~(\ref{ConditionOfImprovement}). Set $k:=k+1$.

  \item[{\bf Step 2.4.}] Set $Lcounter := Lcounter + 1$ and check whether
  $Lcounter \leq N$. If this is the case, then go to Step 2.2. Otherwise, go to
  Step 2.5.

  \item[{\bf Step 2.5.}] Set $p'=\max\{p', q(k)\}$. Determine non-dominated
  hyperintervals considering only groups of hyperintervals with the indices
  from $q(k)$ up to $p'$. Subdivide those non-dominated hyperintervals which satisfy
  inequality~(\ref{ConditionOfImprovement}). Set $k:=k+1$.

 \end{description}

 \item[{\bf Step 3: Switch.}] If condition
   \be \label{ConditionOfLocalImprovement}
     f_{min}(k) \leq f_{min}^{prec}- 0.01 | f_{min}^{prec} |
   \ee
 is satisfied, then go to Step 2 and repeat
 the local phase with the new obtained value of the record $f_{min}(k)$.
 Otherwise, if the hyperinterval $D_{min}(k)$ is not the smallest one,
 or the current partition of $D$ consists only of hyperintervals with equal
 diagonals (i.e., $p(k) < Q(k)$
 or $q(k)=Q(k)$), then go to Step 2.1 and repeat
 the local phase with the old record $f_{min}^{prec}$.

 If the obtained improvement of the best function value is not sufficient to
 satisfy~(\ref{ConditionOfLocalImprovement}) and $D_{min}(k)$
 is the smallest hyperinterval of the current partition (i.e., all the following
 inequalities -- (\ref{ConditionOfLocalImprovement}), $p(k) < Q(k)$, and
 $q(k) = Q(k)$ --- fail), then go to Step 4 and perform the global phase.

 \item[{\bf Step 4: Global Phase.}] Memorize the current record
 $f_{min}^{prec}:=f_{min}(k)$ and  perform the following steps:

 \begin{description}

  \item[{\bf Step 4.1.}] Set $Gcounter := 1$ and fix the group index $p':=p(k)$.

  \item[{\bf Step 4.2.}] Set $p'=\max\{p', q(k)\}$ and calculate
  the `middle' group index $r'=\lceil(q(k)+p')/2\rceil$.

  \item[{\bf Step 4.3.}] Determine non-dominated
  hyperintervals considering only groups of hyperintervals with the indices
  from $q(k)$ up to $r'$. Subdivide those non-dominated hyperintervals which satisfy
  inequality~(\ref{ConditionOfImprovement}). Set $k:=k+1$.

  \item[{\bf Step 4.4.}] If
  condition~(\ref{ConditionOfLocalImprovement}) is satisfied, then
  go to Step 2 and perform the local phase with the new obtained value
  of the record $f_{min}(k)$. Otherwise, go to Step 4.5.

  \item[{\bf Step 4.5.}] Set $Gcounter := Gcounter +1$; check
  whether $Gcounter \leq 2^{N+1}$. If this is the case, then go to Step 4.2.
  Otherwise, go to Step 4.6.

  \item[{\bf Step 4.6.}] Set $p'=\max\{p', q(k)\}$. Determine non-dominated
  hyperintervals considering only groups of hyperintervals with the indices
  from $q(k)$ up to $p'$. Subdivide those non-dominated hyperintervals which satisfy
  inequality~(\ref{ConditionOfImprovement}). Set $k:=k+1$.

  \item[{\bf Step 4.7.}] If
  condition~(\ref{ConditionOfLocalImprovement}) is satisfied, then
  go to Step 2 and perform the local phase with the new obtained value
  of the record $f_{min}(k)$.
  Otherwise, go to Step 4.1: update the value of the group index $p'$
  and repeat the global phase with the old record $f_{min}^{prec}$.
 \end{description}

 \end{description}

\vspace{2mm}

Let us give a few comments on the introduced algorithm. It starts
from the local phase. In the course of this phase, it subdivides
non-dominated hyperintervals with the main diagonals greater than
the main diagonal of $D_{min}(k)$ (i.e., from the groups with the
indices from $q(k)$ up to $p'$; see Steps 2.1 -- 2.4). This
operation is repeated $N$ times, where $N$ is the problem
dimension from~(\ref{(1.3)}). Remind that during each subdivision
of a hyperinterval by the
scheme~(\ref{PartitionD})~--~(\ref{partition:Delta_m}) only one
side of the hyperinterval (namely, the longest side given by
formula~(\ref{side_i})) is partitioned. Thus, performing $N$
iterations of the local phase eventually subdivides all $N$ sides
of hyperintervals around the current best point. At the last,
$(N+1)$-th, iteration of the local phase (see Step~2.5)
hyperintervals with the main diagonal equal to $D_{min}(k)$ are
considered too. In such a way, the hyperinterval containing the
current best point can be partitioned too.

Thus, either the current record is improved, or the hyperinterval
providing this record becomes smaller. If the conditions of
switching to the global phase (see Step 3) are not satisfied, the
local phase is repeated. Otherwise, the algorithm switches to the
global phase, avoiding unnecessary evaluations of $f(x)$ within
already well explored subregions.

During the global phase the algorithm searches for better new
minimizers. It performs series of loops (see Steps 4.1 -- 4.7)
while a non-trivial improvement of the best function value is not
obtained, i.e., condition~(\ref{ConditionOfLocalImprovement}) is
not satisfied. Within a loop of the global phase the algorithm
performs a substantial number of subdivisions of large
hyperintervals located far from the current best point, namely,
hyperintervals from the groups with the indices from $q(k)$ up to
$r'$ (see Steps 4.2 -- 4.5). Since each trial point can belong up
to $2^{N}$ hyperintervals, the number of subdivisions should not
be smaller than~$2^N$. The value of this number equal to $2^{N+1}$
has been chosen because it provided a good performance of the
algorithm in our numerical experiments.

Note that the situation when the current best function value is
improved but the amount of this improvement is not sufficient to
satisfy~(\ref{ConditionOfLocalImprovement}), can be verified at
the end of a loop of the global phase (see Step 4.7). In this
case, the algorithm is not switched to the local phase. It
proceeds with the next loop of the global phase, eventually
updating the index $p'$ (see Step 4.1) but non updating the old
record $f_{min}^{prec}$.

Let us now study convergence properties of the new algorithm
during minimization of the function $f(x)$
from~(\ref{(1.1)})--(\ref{(1.3)}) when the maximal allowed number
of generated trial points $T_{max}$ is equal to infinity. In this
case, the algorithm does not stop (the number of iterations $k$
goes to infinity) and an infinite sequence of trial
points~$\{x^{j(k)}\}$ is generated. The following theorem
establishes the so-called `everywhere dense' convergence of the
new algorithm.

\begin{thm} \label{TheoremConvergence}
For any point $x \in D$ and any $\delta >0$ there exist an
iteration number $k(\delta) \geq 1$ and a point $x' \in
\{x^{j(k)}\}$, $k > k(\delta)$, such that $\|x - x'\| < \delta$.
\end{thm}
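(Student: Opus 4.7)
The plan is to reduce the everywhere-dense property to showing that the maximum main-diagonal length in the current partition $\{D^k\}$ tends to zero as $k \to \infty$. This reduction is immediate because the trial points $\{x^{j(k)}\}$ contain every main-diagonal vertex of every hyperinterval ever produced: any $x \in D$ lies in some $D_i = [a_i, b_i] \in \{D^k\}$, so $\|x - a_i\| \leq \|b_i - a_i\|$, and once the maximum diagonal falls below $\delta$ every such $x$ is within $\delta$ of some trial point.

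The heart of the argument is to prove that at every iteration at least one hyperinterval from group $q(k)$ (the group of currently largest hyperintervals) is subdivided. Each iteration $k$ executes exactly one subdivision step among Steps 2.3, 2.5, 4.3, or 4.6, and the index updates defining $p'$, $p''$, and $r'$ (in particular the explicit $\max\{\cdot, q(k)\}$ operations and the arithmetic $r' = \lceil(q(k)+p')/2\rceil$) ensure that the range of group indices considered always starts at $q(k)$. Within that range, let $D_t$ be any hyperinterval of group $q(k)$ with the smallest value of $F_t = (f(a_t)+f(b_t))/2$. For every other hyperinterval $D_j$ in the admitted range, the difference of lower bounds from (\ref{R_i}) rewrites as $R_j(\tilde L) - R_t(\tilde L) = (F_j - F_t) + \tilde L(d_t - d_j)$, where $d_t \geq d_j$ always, with strict inequality whenever $D_j$ lies outside group $q(k)$, and $F_j \geq F_t$ whenever $D_j$ lies inside group $q(k)$ by the choice of $D_t$. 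Hence for $\tilde L$ sufficiently large, $D_t$ is non-dominated with respect to $\tilde L$ in the sense of Def.~\ref{DefNonDominationL}, and this same large $\tilde L$ drives $R_t(\tilde L) = F_t - \tilde L d_t$ below $f_{min}(k) - \xi$, so condition (\ref{ConditionOfImprovement}) is satisfied for that very $\tilde L$. Thus $D_t$ meets both requirements of the algorithm and is partitioned. I expect this step --- exhibiting a single Lipschitz estimate $\tilde L$ that simultaneously witnesses non-domination and (\ref{ConditionOfImprovement}), uniformly across the four possible subdivision steps --- to be the main technical obstacle.

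Once this subdivision claim is established, $q(k) \to \infty$ follows by a simple bookkeeping argument: the subdivision rule (\ref{partition:D_t})--(\ref{partition:D_2}) places every child in group $l+1$, so no hyperinterval ever moves to a smaller-indexed group, and the finite population of group $q(k)$ strictly decreases at each iteration until it is exhausted, whereupon $q(k)$ increases by at least one. Finally, since $D$ is an $N$-dimensional hypercube of side $s$ and the partition rule (\ref{partition:u})--(\ref{side_i}) always trisects the currently longest side, a straightforward count shows that every hyperinterval in group $l = qN + r$ (with $0 \leq r < N$) has diagonal at most $s\sqrt{N}\cdot 3^{-q}$. Hence the maximum diagonal in $\{D^k\}$ is bounded by $s\sqrt{N}\cdot 3^{-\lfloor q(k)/N \rfloor} \to 0$ as $k \to \infty$, and the reduction of the first paragraph delivers the desired conclusion: choose $k(\delta)$ so that this bound is below $\delta$ for all $k > k(\delta)$, and take $x'=a_i$ for the hyperinterval $D_i \in \{D^k\}$ containing $x$.
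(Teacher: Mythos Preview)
Your proof is correct and follows essentially the same approach as the paper's: reduce to showing the maximum diagonal tends to zero, argue that every iteration subdivides some hyperinterval from the largest group $q(k)$ because a sufficiently large Lipschitz estimate makes it non-dominated and forces (\ref{ConditionOfImprovement}), and conclude that $q(k)\to\infty$ by finiteness of each group. You supply more detail than the paper does---explicitly checking that all four subdivision steps (2.3, 2.5, 4.3, 4.6) include group $q(k)$ in their range, writing out the difference $R_j(\tilde L)-R_t(\tilde L)$, and giving the quantitative diagonal bound $s\sqrt{N}\cdot 3^{-\lfloor q(k)/N\rfloor}$---but the skeleton is identical.
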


\begin{proof}
Trial points generated by the new algorithm are vertices of the
main diagonals of hyperintervals. Due
to~(\ref{PartitionD})--(\ref{partition:Delta_m}), every
subdivision of a hyperinterval produces three new hyperintervals
with the volume equal to a third of the volume of the subdivided
hyperinterval and the proportionally smaller main diagonals. Thus,
fixed a positive value of $\delta$, it is sufficient to prove that
after a finite number of iterations~$k(\delta)$ the largest
hyperinterval of the current partition of $D$ will have the main
diagonal smaller than $\delta$. In such a case, in
$\delta$-neighborhood of any point of $D$ there will exist at
least one trial point generated by the algorithm.

To see this, let us fix an iteration number $k'$ and consider the
group $q(k')$ of the largest hyperintervals of a partition
$\{D^{k'}\}$. As it can be seen from the scheme of the algorithm,
for any $k' \geq 1$ this group is taken into account when
non-dominated hyperintervals are looked for. Moreover, a
hyperinterval $D_t \in \{D^{k'}\}$ from this group having the
smallest sum of the objective function values at its vertices is
partitioned at each iteration $k \geq 1$ of the algorithm. This
happens because there always exists a sufficiently large estimate
$L_\infty$ of the Lipschitz constant $L$ such that the
hyperinterval~$D_t$ is a non-dominated hyperinterval with respect
to $L_\infty$ and condition~(\ref{ConditionOfImprovement}) is
satisfied for the lower bound $R_t(L_\infty)$ (see
Fig.~\ref{fig_Optimal}). Three new hyperintervals generated during
the subdivision of $D_t$ by using the
strategy~(\ref{PartitionD})--(\ref{partition:Delta_m}) are
inserted into the group with the index $q(k')+1$. Hyperintervals
of the group $q(k')+1$ have the volume equal to a third of the
volume of hyperintervals of the group $q(k')$.

Since each group contains only finite number of hyperintervals,
after a sufficiently large number of iterations $k
> k'$ all hyperintervals of the group $q(k')$ will be subdivided.
The group $q(k')$ will become empty and the index of the group of
the largest hyperintervals will increase, i.e., $q(k) = q(k')+1$.
Such a procedure will be repeated with a new group of the largest
hyperintervals. So, when the number of iterations grows, the index
$q(k)$ increases and due to (\ref{GroupIndex}) the index $Q(k)$
increases too. This means, that there exists a finite number of
iterations $k(\delta)$ such that after performing~$k(\delta)$
iterations of the algorithm the largest hyperinterval of the
current partition $\{D^{k(\delta)}\}$ will have the main diagonal
smaller than $\delta$. \qquad
\end{proof}

\section{Numerical results} \label{sectionResults}

In this section, we present results performed to compare the new
algorithm with two methods belonging to the same class: the
original DIRECT algorithm from~\cite{Jones:et:al.(1993)} and its
locally-biased modification DIRECT{\it l} \hspace{1mm}from
\cite{Gablonsky(2001), Gablonsky&Kelley(2001)}. The implementation
of these two methods described in~\cite{Gablonsky(1998),
Gablonsky(2001)} and downloadable from~\cite{Kelley:HomePage} has
been used in all the experiments.

To execute a numerical comparison, we need to define the parameter
$\xi$ of the algorithm from~(\ref{ConditionOfImprovement}). This
parameter can be set either independently from the current record
$f_{min} (k)$, or in a relation with it. Since the objective
function $f(x)$ is supposed to be ``black-box'', it is not
possible to know a priori which of these two ways is better.

In DIRECT~\cite{Jones:et:al.(1993)}, where a similar parameter is
used, a value $\xi$ related to the current minimal function value
$f_{min} (k)$ is fixed as follows
 \be  \label{DirEpsilon}
    \xi = \varepsilon | f_{min} (k) |, \hspace{3mm} \varepsilon
    \geq 0.
 \ee

The choice of $\varepsilon$ between $10^{-3}$ and $10^{-7}$ has
demonstrated good results for DIRECT on a set of test functions
(see \cite{Jones:et:al.(1993)}). Later formula~(\ref{DirEpsilon})
has been used by many authors (see,
e.g.,~\cite{Carter:et:al.(2001), Finkel&Kelley(2004),
Gablonsky(2001), Gablonsky&Kelley(2001), He:et:al.(2002)}) and
also has been realized in the implementation of DIRECT
(see~\cite{Gablonsky(1998), Gablonsky(2001)}) taken for numerical
comparison with the new algorithm. Since the value of $\varepsilon
= 10^{-4}$ recommended in~\cite{Jones:et:al.(1993)} has produced
the most robust results for~DIRECT (see, e.g.,
\cite{Gablonsky(2001), Gablonsky&Kelley(2001), He:et:al.(2002),
Jones:et:al.(1993)}), exactly this value was used
in~(\ref{DirEpsilon}) for DIRECT in our numerical experiments. In
order to have comparable results, the same
formula~(\ref{DirEpsilon}) and $\varepsilon = 10^{-4}$ were used
in the new algorithm too.

The global minimizer $x^* \in D$ was considered to be found when
an algorithm generated a trial point~$x'$ inside a hyperinterval
with a vertex $x^*$ and the volume smaller than the volume of the
initial hyperinterval $D=[a,b]$ multiplied by an accuracy
coefficient~$\Delta$, $0< \Delta \leq 1$, i.e.,
 \be \label{x*Found}
  |x'(i) - x^*(i) | \leq \sqrt[N]{\Delta}(b(i)-a(i)), \hspace{3mm}
  1 \leq i \leq N,
 \ee
where $N$ is from~(\ref{(1.3)}). This condition means that, given
$\Delta$, a point $x'$ satisfies~(\ref{x*Found}) if the
hyperinterval with the main diagonal $[x', x^*]$ and the sides
proportional to the sides of the initial hyperinterval $D=[a,b]$
has a volume at least $\Delta^{-1}$ times smaller than the volume
of $D$. Note that if in~(\ref{x*Found}) the value of $\Delta$ is
fixed and the problem dimension~$N$ increases, the length of the
diagonal of the hyperinterval $[x', x^*]$ increases too. In order
to avoid this undesirable growth, the value of $\Delta$ was
progressively decreased when the problem dimension increased.

We stopped the algorithm either when the maximal number of trials
$T_{max}$ was reached, or when condition~(\ref{x*Found}) was
satisfied. Note that such a type of stopping criterion is
acceptable only when the global minimizer $x^*$ is known, i.e.,
for the case of test functions. When a real ``black-box''
objective function is minimized and global minimization algorithms
have an internal stopping criterion, they execute a number of
iterations (that can be very high) after a `good' estimate
of~$f^*$ has been obtained in order to demonstrate a `goodness' of
the found solution (see, e.g., \cite{Horst&Pardalos(1995),
Pinter(1996), Strongin&Sergeyev(2000)}).

\begin{table}[t]
\caption{Number of trial points for test functions used
in~\cite{Jones:et:al.(1993)}.}
\begin{center} \footnotesize \label{table1}
\begin{tabular}{|c|c|c|c|c|c|c|}\hline
Function & $N$ & $D=[a,b]$ & $\Delta$ &DIRECT &DIRECT{\it l} &New \\
\hline
Shekel 5 & 4 & $[0,10]^4$ & $10^{-6}$ &57 &53 &208 \\
Shekel 7 & 4 & $[0,10]^4$ & $10^{-6}$ &53 &45 &1465 \\
Shekel 10 & 4 & $[0,10]^4$ & $10^{-6}$ &53 &45 &1449 \\
Hartman 3 & 3 & $[0,1]^3$ & $10^{-6}$ &113 &79 & 137\\
Hartman 6 & 6 & $[0,1]^6$ & $10^{-7}$ &144 &78 & 4169 \\
Branin RCOS & 2 &$[-5,10]\times[0,15]$ & $10^{-4}$ &41 &31 &76 \\
Goldstein and Price & 2 &$[-2,2]^2$& $10^{-4}$ &37 &29 & 99 \\
Six-Hump Camel & 2 &$[-3,3]\times[-2,2]$& $10^{-4}$ &105 &127 &128 \\
Shubert & 2 &$[-8,10]^2$& $10^{-4}$ &19 &15 & 59\\ \hline
\end{tabular}
\end{center}
\end{table}

In the first series of experiments, test functions
from~\cite{Dixon&Szego(1978)} and~\cite{Yao(1989)} were used
because in~\cite{Gablonsky(2001), Gablonsky&Kelley(2001),
Jones:et:al.(1993)} DIRECT and DIRECT{\it l} have been tested on
these functions. It can be seen from Table~\ref{table1} that both
methods DIRECT and DIRECT{\it l} have executed a very small amount
of trials until they generated a point in a
neighborhood~(\ref{x*Found}) of a global minimizer. For example,
condition~(\ref{x*Found}) was satisfied for the six-dimensional
Hartman's function after 78 (144) trials performed by DIRECT{\it
l} (DIRECT). Such a small number of trials is explained by a
simple structure of the function. We observe, in accordance with
\cite{Torn:et:al.(1999)}, that the test functions
from~\cite{Dixon&Szego(1978)} used in~\cite{Jones:et:al.(1993)}
are not suitable for testing global optimization methods. These
functions are characterized by a small chance to miss the region
of attraction of the global minimizer (see
\cite{Torn:et:al.(1999)}). Usually, when a real difficult
``black-box'' function of high dimension is minimized, the number
of trials that it is necessary to execute to place a trial point
in the neighborhood of the global minimizer is significantly
higher. The algorithm proposed in this paper is oriented on such a
type of functions. It tries to perform a good examination of the
admissible region in order to reduce the risk of missing the
global solution. Therefore, for simple test functions of
Table~\ref{table1} and the stopping rule~(\ref{x*Found}) it
generated more trial points than DIRECT or DIRECT{\it l}.

Hence, more sophisticated test problems are required for carrying
out numerical comparison among global optimization algorithms (see
also the related discussion in~\cite{Khompatraporn:et:al.(2005)}).

Many difficult global optimization tests can be taken from
real-life applications (see, e.g., \cite{Floudas:et:al(1999)} and
bibliographic references within it). But the lack of comprehensive
information (such as number of local minima, their locations,
attraction regions, local and global values, ecc.) describing
these tests creates an obstacle in verifying efficiency of the
algorithms. Very frequently it is also difficult to fix properly
many correlated parameters determining some test functions because
often the sense of these parameters is not intuitive, especially
in high dimensions. Moreover, tests may differ too much one from
another and as a result it is not possible to have many test
functions with similar properties. Therefore, the use of
randomly-generated classes of test functions having similar
properties can be a reasonable solution for a satisfactory
comparison.

Thus, in our numerical experiments we used the GKLS-generator
described in~\cite{Gaviano:et:al.(2003)} (and free-downloadable
from {\it http://wwwinfo.deis.unical.it/ $\tilde{
}$yaro/GKLS.html}). It generates classes of multidimensional and
multiextremal test functions with known local and global minima.
The procedure of generation consists of defining a convex
quadratic function (paraboloid) systematically distorted by
polynomials. Each test class provided by the generator includes
100 functions and is defined only by the following five
parameters:

$N$ -- problem dimension;

$M$ -- number of local minima;

$f^*$ -- value of the global minimum;

$\rho^*$ -- radius of the attraction region of the global
minimizer;

$r^*$ -- distance from the global minimizer to the vertex of the
paraboloid.

The other necessary parameters are chosen randomly by the
generator for each test function of the class. Note, that the
generator produces always the same test classes for a given set of
the user-defined parameters allowing one to perform repeatable
numerical experiments.

\begin{figure}[!t] \vspace{2mm}
\centerline{\epsfig{file=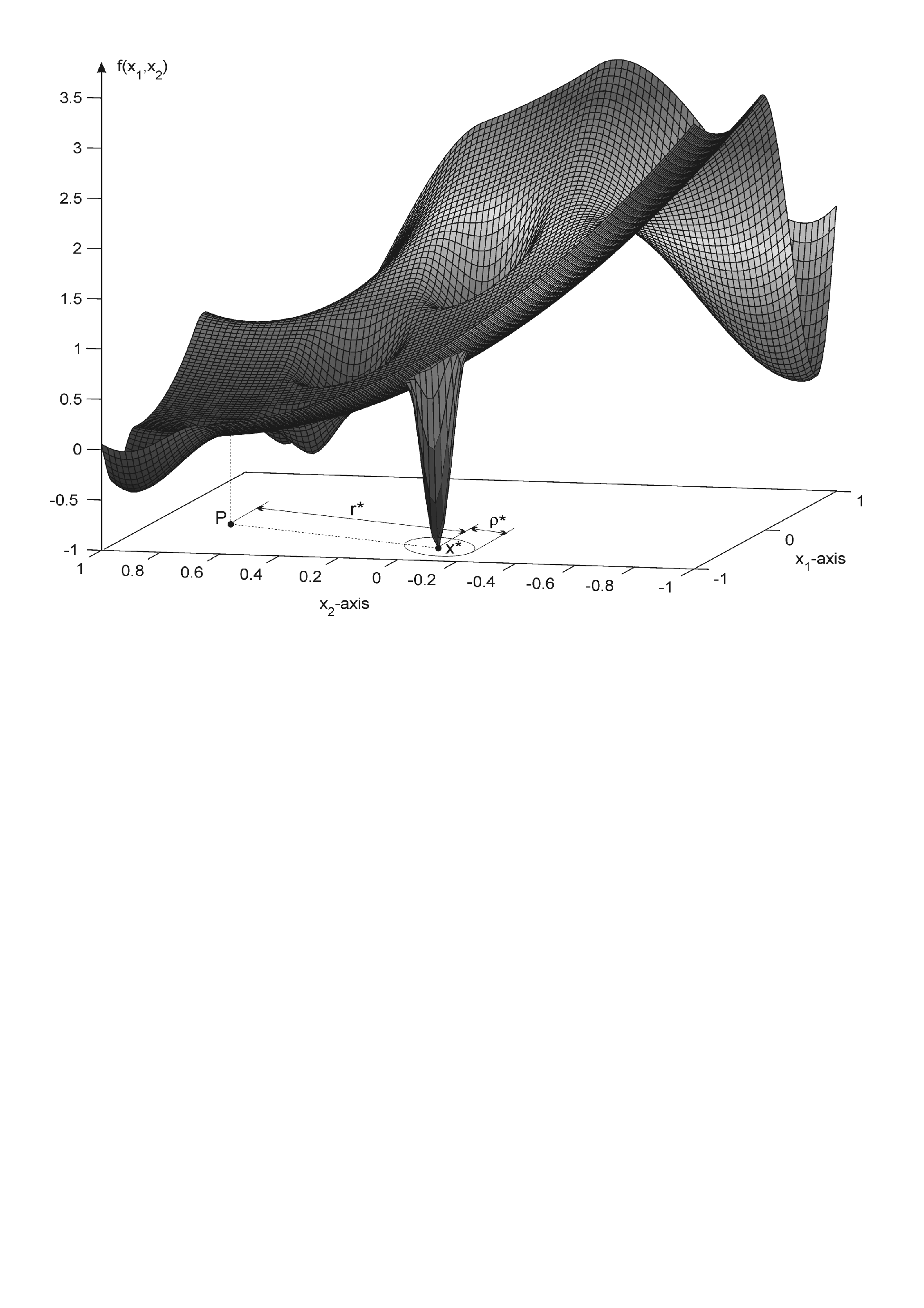,width=115mm,height=90mm,angle=0,silent=}}
\vspace{-2 mm}\caption{An example of the two-dimensional function
from the GKLS test class.} \label{fig_GKLS}
\end{figure}

By changing the user-defined parameters, classes with different
properties can be created. For example, fixed dimension of the
functions and number of local minima, a more difficult class can
be created either by shrinking the attraction region of the global
minimizer, or by moving the global minimizer far away from the
paraboloid vertex.

For conducting numerical experiments, we used eight GKLS classes
of continuously differentiable test functions of dimensions $N=2$,
3, 4, and 5. The number of local minima $M$ was equal to 10 and
the global minimum value $f^*$ was equal to $-1.0$ for all classes
(these values are default settings of the generator). For each
particular problem dimension $N$ we considered two test classes: a
simple class and a difficult one. The difficulty of a class was
increased either by decreasing the radius $\rho^*$ of the
attraction region of the global minimizer (as for two- and
five-dimensional classes), or by increasing the distance $r^*$
from the global minimizer $x^*$ to the paraboloid vertex~$P$
(three- and four-dimensional classes).

In Fig.~\ref{fig_GKLS}, an example of a test function from the
following continuously differentiable GKLS class is given: $N=2$,
$M=10$, $f^*=-1$, $\rho^*=0.10$, and $r^*=0.90$. This function is
defined over the region $D=[-1,1]^2$ and its number is~87 in the
given test class. The randomly generated global minimizer of this
function is $x^*=(-0.767, -0.076)$ and the coordinates of~$P$ are
$(-0.489, 0.780)$. Results for the whole class to which the
function from Fig.~\ref{fig_GKLS} belongs to are given in
Tables~\ref{table2} and~\ref{table3} on the second line.

We stopped algorithms either when the maximal number of
trials~$T_{max}$ equal to~1\,000\,000 was reached, or when
condition~(\ref{x*Found}) was satisfied. To describe experiments,
we introduce the following designations:

$T_s$ -- the number of trials performed by the method under
consideration to solve the problem number $s$, $1 \leq s \leq
100$, of a fixed test class. If the method was not able to solve a
problem $j$ in less than $T_{max}$ function evaluations, $T_j$
equal to $T_{max}$ was taken.

$m_s$ -- the number of hyperintervals generated to solve the
problem $s$.

The following four criteria were used to compare the methods.

\vspace{1mm}

{\bf Criterion C1.} Number of trials $T_{s^*}$ required for a
method to satisfy condition~(\ref{x*Found}) for {\it all} 100
functions of a particular test class, i.e.,
 \begin{equation}
   T_{s^*} = \max_{1 \leq s \leq 100} T_s, \hspace{5mm}
   s^* = \arg\max_{1 \leq s \leq 100} T_s. \label{f_num}
 \end{equation}

{\bf Criterion C2.} The corresponding number of hyperintervals,
$m_{s^*}$, generated by the method, where $s^*$ is
from~(\ref{f_num}).

{\bf Criterion C3.} Average number of trials $T_{avg}$ performed
by the method during minimization of {\it all} 100 functions from
a particular test class, i.e.,
 \begin{equation}
  T_{avg} = \frac{1}{100}\sum_{s=1}^{100} T_s. \label{avg_trials}
 \end{equation}

{\bf Criterion C4.} Number $p$ (number $q$) of functions from a
class for which DIRECT or DIRECT{\it l} executed less (more)
function evaluations than the new algorithm. If~$T_s$ is the
number of trials performed by the new algorithm and $T_s'$ is the
corresponding number of trials performed by a competing method,
$p$ and $q$ are evaluated as follows
 \begin{equation}
  p = \sum_{s=1}^{100} \delta_s', \hspace{5mm} \label{winDIRECT}
  \delta_s' =
  \left\{
    \begin{array}{ll}
       1, & T_s' < T_s,  \\
       0, & {\rm otherwise}.
   \end{array} \right.
 \end{equation}
 \begin{equation}
  q= \sum_{s=1}^{100} \delta_s, \hspace{5mm} \label{winNEW}
  \delta_s =
  \left\{
    \begin{array}{ll}
       1, & T_s < T_s',  \\
       0, & {\rm otherwise}.
   \end{array} \right.
 \end{equation}
If $p+q < 100$ then both the methods under consideration solve the
remaining ($100-p-q$) problems with the same number of function
evaluations.

\vspace{1mm}

Note that results based on Criteria C1 and C2 are mainly
influenced by minimization of the most difficult functions of a
class. Criteria C3 and C4 deal with average data of a class.

Criterion C1 is of the fundamental importance for the methods
comparison on the whole test class because it shows how many
trials it is necessary to execute to solve {\it all} the problems
of a class. Thus, it represents the worst case results of the
given method on the fixed class.

At the same time, the number of generated hyperintervals
(Criterion C2) provides an important characteristic of any
partition algorithm for solving~(\ref{(1.1)})--(\ref{(1.3)}). It
reflects indirectly the degree of qualitative examination of $D$
during the search for a global minimum. The greater is this
number, the more information about the admissible domain is
available and, therefore, the smaller should be the risk to miss
the global minimizer. However, algorithms should not generate many
redundant hyperintervals since this slows down the search and is
therefore a disadvantage of the method.

Let us first compare the three methods on Criteria C1 and C2.
Results of numerical experiments with eight GKLS tests classes are
shown in Tables \ref{table2} and \ref{table3}. The accuracy
coefficient $\Delta$ from~(\ref{x*Found}) is given in the second
column of the tables. Table~\ref{table2} reports the maximal
number of trials required for satisfying condition~(\ref{x*Found})
for a half of the functions of a particular class (columns
``50\%'') and for all 100 function of the class (columns
``100\%''). The notation `$>$ 1\,000\,000 $(j)$' means that after
1\,000\,000 function evaluations the method under consideration
was not able to solve~$j$ problems. The corresponding numbers of
generated hyperintervals are indicated in Table~\ref{table3}.
Since DIRECT and DIRECT{\it l} use during their work the
central-sampling partition strategy, the number of generated trial
points and the number of generated hyperintervals coincide for
these methods.

Note that on a half of test functions from each class (which were
the most simple for each method with respect to the other
functions of the class) the new algorithm manifested a good
performance with respect to DIRECT and DIRECT{\it l} in terms of
the number of generated trial points (see columns ``50\%'' in
Table~\ref{table2}). When all the functions were taken in
consideration (and, consequently, difficult functions of the class
were considered too), the number of trials produced by the new
algorithm was much fewer in comparison with two other methods (see
columns ``100\%'' in Table~\ref{table2}), ensuring at the same
time a substantial examination of the admissible domain (see
Table~\ref{table3}).

\begin{table}[t]
\caption{Number of trial points for GKLS test functions (Criterion
C1).}
\begin{center} \scriptsize \label{table2}
\begin{tabular}{@{\extracolsep{\fill}}|c|c|c|c|r|r|r|r|r|r|}\hline
$N$ & $\Delta$ & \multicolumn{2}{c|}{Class} &
\multicolumn{3}{c|}{50\%} & \multicolumn{3}{c|}{100\%}\\
\cline{3-10}& &$r^*$ &$\rho^*$ &DIRECT &DIRECT{\it l} &New &DIRECT &DIRECT{\it l} &New\\
\hline
2 &$10^{-4}$ &.90 &.20 &111  &152 &166  &1159  &2318 &403\\
2 &$10^{-4}$ &.90 &.10 &1062  &1328 &613  &3201  &3414 &1809\\
\hline
3 &$10^{-6}$ &.66 &.20 &386   &591 &615  &12507 &13309 &2506\\
3 &$10^{-6}$ &.90 &.20 &1749  &1967 &1743 &$>$1000000 (4) &29233 & 6006\\
\hline
4 &$10^{-6}$ &.66 &.20 &4805  &7194 &4098  &$>$1000000 (4) &118744 &14520\\
4 &$10^{-6}$ &.90 &.20 &16114 &33147 &15064 &$>$1000000 (7)
&287857 &42649\\ \hline
5 &$10^{-7}$ &.66 &.30 &1660  &9246 &3854  &$>$1000000 (1) &178217 &33533\\
5 &$10^{-7}$ &.66 &.20 &55092  &126304 &24616  &$>$1000000 (16) &$>$1000000 (4) &93745\\
\hline
\end{tabular}
\end{center}
\end{table}

\begin{table}[t]
\caption{Number of hyperintervals for GKLS test functions
(Criterion C2).}
\begin{center} \scriptsize \label{table3}
\begin{tabular}{@{\extracolsep{\fill}}|c|c|c|c|r|r|r|r|r|r|}\hline
$N$ & $\Delta$ & \multicolumn{2}{c|}{Class} &
\multicolumn{3}{c|}{50\%} & \multicolumn{3}{c|}{100\%} \\
\cline{3-10}& &$r^*$ &$\rho^*$ &DIRECT &DIRECT{\it l} &New &DIRECT &DIRECT{\it l} &New\\
\hline
2 &$10^{-4}$ &.90 &.20 &111  &152 &269  &1159  &2318 & 685\\
2 &$10^{-4}$ &.90 &.10 &1062 &1328 &1075 &3201 &3414 &3307 \\
\hline
3 &$10^{-6}$ &.66 &.20 &386  &591  &1545 &12507 &13309 &6815\\
3 &$10^{-6}$ &.90 &.20 &1749 &1967 &5005 &$>$1000000 &29233 &17555\\
\hline
4 &$10^{-6}$ &.66 &.20 &4805 &7194 &15145 &$>$1000000 &118744 &73037\\
4 &$10^{-6}$ &.90 &.20 &16114 &33147 &68111 &$>$1000000&287857 &211973\\
\hline
5 &$10^{-7}$ &.66 &.30 &1660 &9246 &21377 &$>$1000000&178217 &206323\\
5 &$10^{-7}$ &.66 &.20 &55092  &126304 &177927  &$>$1000000 &$>$1000000 &735945\\
\hline
\end{tabular}
\end{center}
\end{table}

In our opinion, the impossibility of DIRECT to determine global
minimizers of several test functions is related to the following
fact. DIRECT found quickly the vertex of the paraboloid (at which
the function value is set by default equal to 0) used for
determining GKLS test functions. Hence, the parameter $\xi$ was
very close to zero (due to~(\ref{DirEpsilon})) and condition
similar to~(\ref{ConditionOfImprovement}) was satisfied for almost
all small hyperintervals. Moreover, many small hyperintervals
around the paraboloid vertex with function values close one to
another and to the current minimal value were created. In such a
situation, DIRECT subdivided many of these hyperintervals. Thus,
at each iteration DIRECT partitioned a large amount of small
hyperintervals and, therefore, was not able to go out from the
attraction region of the paraboloid vertex.

\begin{table}[t]
\caption{Number of trial points for shifted GKLS test functions
(Criterion C1).}
\begin{center}\scriptsize \label{table4}
\begin{tabular}{@{\extracolsep{\fill}}|c|c|c|c|r|r|r|r|r|r|}\hline
$N$ & $\Delta$ & \multicolumn{2}{c|}{Class} &
\multicolumn{3}{c|}{50\%} & \multicolumn{3}{c|}{100\%}\\
\cline{3-10}& &$r^*$ &$\rho^*$ &DIRECT &DIRECT{\it l} &New &DIRECT &DIRECT{\it l} &New\\
\hline
2 &$10^{-4}$ &.90 &.20 &111  &146 &165  &1087  &1567 &403\\
2 &$10^{-4}$ &.90 &.10 &911   &1140 &508  &2973  &2547 &1767 \\
\hline
3 &$10^{-6}$ &.66 &.20 &364   &458 &606  &6292&10202 &1912 \\
3 &$10^{-6}$ &.90 &.20 &1485   &1268 &1515  &14807 &28759 &4190 \\
\hline
4 &$10^{-6}$ &.66 &.20 &4193   &4197 &3462  &37036 &95887 &14514 \\
4 &$10^{-6}$ &.90 &.20 &14042   &24948 &11357  &251801 &281013 &32822 \\
\hline
5 &$10^{-7}$ &.66 &.30 &1568   &3818 &3011  &102869 &170709 &15343 \\
5 &$10^{-7}$ &.66 &.20 &32926   &116025 &15071  &454925 &$>1000000 (1)$ &77981\\
\hline
\end{tabular}
\end{center}
\end{table}

\begin{table}[!t]
\caption{Number of hyperintervals for shifted GKLS test functions
(Criterion C2).}
\begin{center}\scriptsize \label{table5}
\begin{tabular}{@{\extracolsep{\fill}}|c|c|c|c|r|r|r|r|r|r|}\hline
$N$ & $\Delta$ & \multicolumn{2}{c|}{Class} &
\multicolumn{3}{c|}{50\%} & \multicolumn{3}{c|}{100\%} \\
\cline{3-10}& &$r^*$ &$\rho^*$ &DIRECT &DIRECT{\it l} &New &DIRECT &DIRECT{\it l} &New\\
\hline
2 &$10^{-4}$ &.90 &.20 &111  &146 &281  &1087  &1567 &685\\
2 &$10^{-4}$ &.90 &.10 &911   &1140 &905  &2973  &2547 &3227\\
\hline
3 &$10^{-6}$ &.66 &.20 &364   &458 &1585  &6292 &10202 &5337 \\
3 &$10^{-6}$ &.90 &.20 &1485   &1268 &4431  &14807 &28759 &12949 \\
\hline
4 &$10^{-6}$ &.66 &.20 &4193   &4197 &14961  &37036 &95887 &73049\\
4 &$10^{-6}$ &.90 &.20 &14042   &24948 &57111  &251801 &281013 &181631\\
\hline
5 &$10^{-7}$ &.66 &.30 &1568   &3818 &17541  &102869 &170709 &106359 \\
5 &$10^{-7}$ &.66 &.20 &32926   &116025 &108939  &454925 &$>1000000$ &685173\\
\hline
\end{tabular}
\end{center}
\end{table}

\begin{table}[!t]
\caption{Improvement obtained by the new algorithm in terms of
Criterion C1.}
\begin{center}\scriptsize \label{table6}
\begin{tabular}{@{\extracolsep{\fill}}|c|c|c|c|c|c|c|c|}\hline
$N$ & $\Delta$ & \multicolumn{2}{c|}{Class} &
\multicolumn{2}{c|}{GKLS} & \multicolumn{2}{c|}{Shifted GKLS} \\
\cline{3-8}& &$r^*$ &$\rho^*$ &DIRECT/New &DIRECT{\it l}/New &DIRECT/New &DIRECT{\it l}/New \\
\hline
2 &$10^{-4}$ &.90 &.20  &2.88       &5.75  &2.70 &3.89\\
2 &$10^{-4}$ &.90 &.10  &1.77       &1.89  &1.68  &1.44 \\
\hline
3 &$10^{-6}$ &.66 &.20  &4.99       &5.31  &3.29  &5.34 \\
3 &$10^{-6}$ &.90 &.20  &$>$166.50  &4.87  &3.53  &6.86 \\
\hline
4 &$10^{-6}$ &.66 &.20  &$>$68.87   &8.18  &2.55  &6.61 \\
4 &$10^{-6}$ &.90 &.20  &$>$23.45   &6.75  &7.67  &8.56 \\
\hline
5 &$10^{-7}$ &.66 &.30  &$>$29.82   &5.31  &6.70  &11.13 \\
5 &$10^{-7}$ &.66 &.20 &$>$10.67 &$>$10.67  &5.83 &$>$12.82\\
\hline
\end{tabular}
\end{center}
\end{table}

Since DIRECT{\it l} at each iteration subdivides only one
hyperinterval among all hyperintervals with the same function
value, it was able to determine some other local minimizers (and
the global minimizer too) in the given maximal number of
trials~$T_{max}$. So, DIRECT{\it l} overcame the stagnation of the
search around the paraboloid vertex. But due to the locally-biased
character of DIRECT{\it l}, it spent too much trials exploring
various local minimizers which were not global. For this reason,
DIRECT{\it l} was unable to find the global minimizers of four
difficult five-dimensional functions.

In order to avoid stagnation of DIRECT near the paraboloid vertex
and to put DIRECT and DIRECT{\it l} in a more advantageous
situation, we shifted all generated functions, adding to their
values the constant 2. In such a way the value of each function at
the paraboloid vertex became equal to $2$ (and the global minimum
value~$f^*$ was increased by 2, i.e., became equal to $1$).
Results of numerical experiments with shifted GKLS classes
(defined in the rest by the same parameters) are reported in
Tables~\ref{table4} and \ref{table5}. Note that in this case
DIRECT has found the global solutions of all problems. DIRECT{\it
l} has not found the global minimizer of one five-dimensional
function. It can be seen from Tables~\ref{table4} and \ref{table5}
that also on these tests classes the new algorithm has manifested
its superiority with respect to DIRECT and DIRECT{\it l} in terms
of the number of generated trial points (Criterion C1).

Table~\ref{table6} summarizes (based on the data from Tables
\ref{table2}~--~\ref{table5}) results (in terms of Criterion C1)
of numerical experiments performed on 1600 test functions from
GKLS and shifted GKLS continuously differentiable classes. It
represents the ratio between the maximal number of trials
performed by DIRECT and DIRECT{\it l} with respect to the
corresponding number of trials performed by the new algorithm. It
can be seen from Table~\ref{table6} that the new method
outperforms both competitors significantly on the given test
classes when Criteria C1 and C2 are considered.

\begin{table}[t]
\caption{Average number of trial points for GKLS test functions
(Criterion C3).}
\begin{center} \scriptsize \label{table7}
\begin{tabular}{@{\extracolsep{\fill}}|c|c|c|c|r|r|r|c|c|}\hline
$N$ & $\Delta$ & \multicolumn{2}{c|}{Class} &DIRECT\hspace{1mm}
&DIRECT{\it l}\hspace{1mm} &New\hspace{2mm} &\multicolumn{2}{c|}{Improvement} \\
\cline{3-4} \cline{8-9} & & $r^*$ &$\rho^*$ & & & &DIRECT/New
&DIRECT{\it l}/New \\ \hline
2 &$10^{-4}$ &.90 &.20 &198.89   &292.79  &176.25 &1.13 &1.66 \\
2 &$10^{-4}$ &.90 &.10 &1063.78  &1267.07  &675.74 &1.57 &1.88 \\
\hline
3 &$10^{-6}$ &.66 &.20 &1117.70   &1785.73  &735.76 &1.52 &2.43 \\
3 &$10^{-6}$ &.90 &.20 & $>$42322.65  &4858.93  &2006.82 &$>$21.09 &2.42 \\
\hline
4 &$10^{-6}$ &.66 &.20 &$>$47282.89   &18983.55  &5014.13 &$>$9.43 &3.79 \\
4 &$10^{-6}$ &.90 &.20 &$>$95708.25   &68754.02  &16473.02 &$>$5.81 &4.17 \\
\hline
5 &$10^{-7}$ &.66 &.30 &$>$16057.46   &16758.44  &5129.85 &$>$3.13 &3.27 \\
5 &$10^{-7}$ &.66 &.20 &$>$217215.58   &$>$269064.35  &30471.83 &$>$7.13 &$>$8.83 \\
\hline
\end{tabular}
\end{center}
\end{table}

\begin{table}[t]
\caption{Average number of trial points for shifted GKLS test
functions (Criterion C3).}
\begin{center} \scriptsize \label{table8}
\begin{tabular}{@{\extracolsep{\fill}}|c|c|c|c|r|r|r|c|c|}\hline
$N$ & $\Delta$ & \multicolumn{2}{c|}{Class} &DIRECT &DIRECT{\it
l}\hspace{1mm} &New\hspace{2mm} &\multicolumn{2}{c|}{Improvement} \\
\cline{3-4} \cline{8-9} & & $r^*$ &$\rho^*$ & & & &DIRECT/New
&DIRECT{\it l}/New \\ \hline
2 &$10^{-4}$ &.90 &.20 &185.83   &249.25  &173.43  &1.07 &1.44 \\
2 &$10^{-4}$ &.90 &.10 &953.34   &1088.13  &609.36 &1.56 &1.79 \\
\hline
3 &$10^{-6}$ &.66 &.20 &951.04   &1434.33  &683.73  &1.39 &2.10 \\
3 &$10^{-6}$ &.90 &.20 &2226.36   &3707.85  &1729.55 &1.29 &2.14 \\
\hline
4 &$10^{-6}$ &.66 &.20 &7110.72   &14523.45  &4388.22  &1.62 &3.31 \\
4 &$10^{-6}$ &.90 &.20 &24443.60   &56689.06  &12336.56 &1.98 &4.60 \\
\hline
5 &$10^{-7}$ &.66 &.30 &5876.99   &10487.80 &4048.31  &1.45 &2.59 \\
5 &$10^{-7}$ &.66 &.20 &59834.38   &$>$182385.59  &19109.20 &3.13 &$>$9.54 \\
\hline
\end{tabular}
\end{center}
\end{table}

Let us now compare the three methods using Criteria C3 and C4.
Tables~\ref{table7} and~\ref{table8} report the average numbers of
trials performed during minimization of all 100 functions from the
same GKLS and shifted GKLS classes, respectively (Criterion~C3).
The columns ``Improvement'' of these tables represent the ratios
between the average numbers of trials performed by DIRECT and
DIRECT{\it l} with respect to the corresponding numbers of trials
performed by the new algorithm. The symbol~`$>$' reflects the
situation when not all functions of a class were successfully
minimized by the method under consideration in sense of
condition~(\ref{x*Found}). This means that the method stopped when
$T_{max}$ trials have been executed during minimization of several
functions of this particular test class. In these cases, the value
of $T_{max}$ equal to 1\,000\,000 was used in calculations of the
average value in~(\ref{avg_trials}), providing in such a way a
lower estimate of the average. As it can be seen from
Tables~\ref{table7} and~\ref{table8}, the new method outperforms
DIRECT and DIRECT{\it l} also on Criterion C3.

\begin{table}[!t]
\caption{Comparison between the new algorithm and DIRECT and
DIRECT{\it l} in terms of Criterion C4.}
\begin{center}\scriptsize \label{table9}
\begin{tabular}{@{\extracolsep{\fill}}|c|c|c|c|c|c|c|c|}\hline
$N$ & $\Delta$ & \multicolumn{2}{c|}{Class} &
\multicolumn{2}{c|}{GKLS} & \multicolumn{2}{c|}{Shifted GKLS} \\
\cline{3-8}& &$r^*$ &$\rho^*$ &DIRECT\,:\,New
&DIRECT{\it l}\,:\,New &DIRECT\,:\,New &DIRECT{\it l}\,:\,New \\
\hline
2 &$10^{-4}$ &.90 &.20  &61\,:\,39    &52\,:\,47   &61\,:\,38   &54\,:\,46  \\
2 &$10^{-4}$ &.90 &.10  &36\,:\,64    &23\,:\,77   &37\,:\,63   &23\,:\,77  \\
\hline
3 &$10^{-6}$ &.66 &.20  &66\,:\,34    &54\,:\,46   &65\,:\,35   &62\,:\,38  \\
3 &$10^{-6}$ &.90 &.20  &58\,:\,42    &51\,:\,49   &56\,:\,44   &54\,:\,46  \\
\hline
4 &$10^{-6}$ &.66 &.20  &51\,:\,49    &37\,:\,63   &50\,:\,50   &44\,:\,56  \\
4 &$10^{-6}$ &.90 &.20  &47\,:\,53    &42\,:\,58   &46\,:\,54   &43\,:\,57  \\
\hline
5 &$10^{-7}$ &.66 &.30  &66\,:\,34    &26\,:\,74   &67\,:\,33   &42\,:\,58  \\
5 &$10^{-7}$ &.66 &.20  &34\,:\,66    &27\,:\,73   &32\,:\,68   &32\,:\,68  \\
\hline
\end{tabular}
\end{center}
\end{table}

Finally, results of comparison between the new algorithm and its
two competitors in terms of Criterion C4 are reported in
Table~\ref{table9}. This table shows how often the new algorithm
was able to minimize each of 100 functions of a class with a
smaller number of trials with respect to DIRECT or DIRECT{\it l}.
The notation `$p$\,:\,$q$' means that among~100 functions of a
particular test class there are $p$ functions for which DIRECT (or
DIRECT{\it l}) spent less function trials than the new algorithm
and $q$ functions for which the new algorithm generated fewer
trial points with respect to DIRECT (or DIRECT{\it l}) ($p$ and
$q$ are from~(\ref{winDIRECT}) and~(\ref{winNEW}), respectively).
For example, let us compare the new method with DIRECT{\it l} on
the GKLS two-dimensional class with parameters $r^* = 0.90$,
$\rho^* = 0.20$ (see Table~\ref{table9}, the cell `52\,:\,47' of
the first line). We can see that DIRECT{\it l} was better (was
worse) than the new method on $p=52$ ($q=47$) functions of this
class, and one problem was solved by the two methods with the same
number of trials.

It can be seen from Table~\ref{table9} that DIRECT and DIRECT{\it
l} behave better than the new algorithm with respect to
Criterion~C4 when simple functions are minimized (we remind that
for each problem dimension the first class is simpler than the
second one). For example, for the difficult GKLS two-dimensional
class and DIRECT{\it l} we have `23\,:\,77' instead of `52\,:\,47'
for the simple class. If a more difficult test class is taken, the
new method outperforms its two competitors (see the second --
difficult~-- classes of the dimensions $N=2$, 4, and 5 in
Table~\ref{table9}). For the three-dimensional classes DIRECT and
DIRECT{\it l} were better than the new method (see
Table~\ref{table9}). This happens because the second
three-dimensional class (even being more difficult than the first
one because the number $q$ has increased in all the cases)
continues to be too simple. Thus, since the new method is oriented
on solving difficult multidimensional multiextremal problems, more
hard objective functions are presented in a test class, more
pronounced is advantage of the new algorithm.

\section{A brief conclusion}

The problem of global minimization of a multidimensional
``black-box'' function satisfying the Lipschitz condition over a
hyperinterval with an unknown Lipschitz constant has been
considered in this paper. A new algorithm developed in the
framework of diagonal approach for solving the Lipschitz global
optimization problems has been presented. In the algorithm, the
partition of the admissible region into a set of smaller
hyperintervals is performed by a new efficient diagonal partition
strategy. This strategy allows one to accelerate significantly the
search procedure in terms of function evaluations with respect to
the traditional diagonal partition strategies. A new technique
balancing usage of the local and global information has been also
incorporated in the new method.

In order to calculate the lower bounds of $f(x)$ over
hyperintervals, possible estimates of the Lipschitz constant
varying from zero to infinity are considered at each iteration of
the algorithm. The procedure of estimating the Lipschitz constant
evolves the ideas of the popular method DIRECT
from~\cite{Jones:et:al.(1993)} to the case of diagonal algorithms.
The `everywhere dense' convergence of the new algorithm has been
established. Extensive numerical experiments executed on more than
1600 test functions have demonstrated a quite satisfactory
performance of the new algorithm with respect to
DIRECT~\cite{Jones:et:al.(1993)} and DIRECT{\it l}
\cite{Gablonsky(2001), Gablonsky&Kelley(2001)} when hard
multidimensional functions are minimized.

\vspace{5mm}

\bibliographystyle{siam}
\bibliography{directADC_Rev}

\end{document}